\newtheorem{definition}{Definition}[section]
\newtheorem{proposition}[definition]{Proposition}
\newtheorem{corollary}[definition]{Corollary}
\newtheorem{remark}[definition]{Remark}
\newtheorem{theorem}[definition]{Theorem}
\newtheorem{example}[definition]{Example}
\def\mb{\mathbb}
\newcommand{\nat}{\mbox{$\;\natural \;$}}
\def\rawo\lonra{\longrightarrow}
\def\ot{\otimes}
\newcommand{\selabel}[1]{\label{se:#1}}
\newenvironment{proof}{{\it Proof.}}{\hfill $ \square $ \vskip 4mm}
\begin{document}
\title{L-R-smash products and 
L-R-twisted tensor products of algebras
\thanks{Research partially supported by the CNCSIS project 
 ''Hopf algebras, cyclic homology and monoidal categories'', 
contract nr. 560/2009, CNCSIS code $ID_{-}69$.}}
\author {M\u ad\u alin Ciungu\\ University of Bucharest, 
Faculty of Mathematics and Informatics\\
Str. Academiei 14, RO-010014 Bucharest 1, Romania\\
\and
Florin Panaite\\
Institute of Mathematics of the 
Romanian Academy\\ 
PO-Box 1-764, RO-014700 Bucharest, Romania\\
e-mail: Florin.Panaite@imar.ro
}
\date{}
\maketitle

\begin{abstract}
We introduce a common generalization of the L-R-smash product and 
twisted tensor product of algebras, under the name 
L-R-twisted tensor product of algebras. We investigate some 
properties of this new construction, for instance we prove a result 
of the type ''invariance under twisting'' and we show that 
under certain circumstances L-R-twisted tensor products 
of algebras may be iterated. 
\end{abstract}
\section*{Introduction}
${\;\;\;}$
The L-R-smash product over a cocommutative Hopf algebra was introduced 
and studied in a series of papers  
\cite{b1}, \cite{b2}, \cite{b3}, \cite{b4}, with motivation and 
examples coming from the theory of deformation quantization. This 
construction was generalized in \cite{pvo} to the case of arbitrary 
bialgebras (even quasi-bialgebras), as follows:  
if $H$ is a bialgebra, ${\cal A}$ an $H$-bimodule algebra 
and ${\mb A}$ an  $H$-bicomodule algebra, the 
L-R-smash product   ${\cal A}\nat {\mb A}$  is an associative algebra 
structure defined on  ${\cal A}\ot {\mb A}$ by the 
multiplication rule 
$(\varphi \nat u)(\varphi '\nat u')=(\varphi \cdot u'_{<1>})(u_{[-1]}
\cdot \varphi ')\nat u_{[0]}u'_{<0>}$,  
for all $\varphi , \varphi '\in {\cal A}$ and $u, u'\in {\mb A}$. 
The usual smash product $A\# H$ (where $A$ is a left $H$-module algebra) 
is a particular case, namely $A\# H=A\nat H$ if we regard $A$ 
as an $H$-bimodule algebra with trivial right $H$-action (and 
$H$ with its canonical $H$-bicomodule algebra structure). 

On the other hand, if $A$, $B$ are associative algebras and 
$R:B\ot A\rightarrow A\ot B$ is a linear map satisfying certain axioms, 
then $A\ot B$ becomes an associative algebra with a 
multiplication defined in terms of $R$ and the multiplications of 
$A$ and $B$. This construction appeared in a number of contexts and 
under different names. Following \cite{Cap}, we call such an $R$ a 
{\em twisting map} and the algebra structure on $A\ot B$ afforded by it 
the {\em twisted tensor product} of $A$ and $B$ and denote it 
by $A\ot _RB$. 
The twisted tensor product of 
algebras may be regarded as a representative for the cartesian product 
of noncommutative spaces, better suited than the ordinary tensor product, 
see \cite{Cap}, \cite{jlpvo}, \cite{lpvo} for a detailed discussion and 
references. There are many examples of twisted tensor products 
of algebras (see for instance \cite{cibils}, \cite{stefan} for some 
concrete examples and classification results), the most relevant for us here being the usual 
smash product $A\# H$ (where $H$ is a bialgebra and $A$ is a 
left $H$-module algebra).   

If we look at a general L-R-smash product  ${\cal A}\nat {\mb A}$,  
we realize quickly that it is {\em not} an example of a 
twisted tensor product of algebras. It appears thus natural 
to try to find a more general construction able to include the 
L-R-smash product as a particular case; that is, 
to find a common generalization for the L-R-smash product 
and the twisted tensor product of algebras. We introduce such a 
construction in this paper, under the name 
{\em L-R-twisted tensor product of algebras}, denoted 
by $A\;_Q\ot _RB$, where $A$, $B$ are algebras and 
$R:B\ot A\rightarrow A\ot B$, 
$Q:A\ot B\rightarrow A\ot B$ are linear maps 
satisfying certain compatibility conditions. If 
$Q=id_{A\ot B}$, then  $A\;_Q\ot _RB$ coincides 
with the twisted tensor product $A\ot _RB$. 

We find then a number of properties of this new construction, 
inspired by properties of the L-R-smash product or by properties of the 
twisted tensor product of algebras (or both). For instance, 
we prove that an L-R-twisted tensor product  $A\;_Q\ot _RB$ 
with bijective $Q$ is isomorphic to a certain twisted tensor 
product  $A\ot _PB$, generalizing a result from \cite{pvo} 
stating that an L-R-smash product over a Hopf algebra 
with bijective antipode is isomorphic to a so-called 
diagonal crossed product. Also, by 
generalizing the corresponding result for twisted tensor products 
proved in \cite{jlpvo}, we show that under certain circumstances 
L-R-twisted tensor products may be iterated (but for achieving this, 
we prove first a result of this type for L-R-smash products that will 
serve as a guiding example). Finally, we prove a result of the type 
''invariance under twisting'' for L-R-twisted tensor products of algebras, 
as a common generalization of the corresponding one for 
twisted tensor products proved in \cite{jlpvo} and of an 
invariance under twisting for L-R-smash products that we prove here 
and use also as a guiding example.    
\section{Preliminaries}\selabel{1}
${\;\;\;}$
In this section we recall some definitions and results and fix  
notation that will be used throughout the paper. 
We work over a commutative field $k$. All algebras, linear spaces
etc. will be over $k$; unadorned $\ot $ means $\ot_k$. By ''algebra'' we 
always mean an associative unital algebra.

We recall from \cite{Cap}, \cite{VanDaele} that, given two algebras $A$, $B$ 
and a $k$-linear map $R:B\ot A\rightarrow A\ot B$, with notation 
$R(b\ot a)=a_R\ot b_R$, for $a\in A$, $b\in B$, satisfying the conditions 
\begin{eqnarray*}
&&a_R\otimes 1_R=a\otimes 1, \;\;\;\;1_R\otimes b_R=1\otimes b,  \\
&&(aa')_R\otimes b_R=a_Ra'_r\otimes b_{R_r}, \\
&&a_R\otimes (bb')_R=a_{R_r}\otimes b_rb'_R, 
\end{eqnarray*}
for all $a, a'\in A$ and $b, b'\in B$ (where $r$ is another 
copy of $R$), if we define on $A\ot B$ a new multiplication, by 
$(a\ot b)(a'\ot b')=aa'_R\ot b_Rb'$, then this new multiplication is associative 
and has unit $1\ot 1$. In this case, the map $R$ is called 
a {\bf twisting map} between $A$ and $B$ and the new algebra 
structure on $A\ot B$ is denoted by $A\ot _RB$ and called the 
{\bf twisted tensor product} of $A$ and $B$ afforded by $R$. 

Let $H$ be a bialgebra, ${\cal A}$ an $H$-bimodule algebra 
(with $H$-module structures denoted by $h\ot \varphi \mapsto 
h\cdot \varphi $ and $\varphi \ot h\mapsto \varphi \cdot h$ for all 
$h\in H$, $\varphi \in {\cal A}$) and ${\mb A}$ an 
$H$-bicomodule algebra (with $H$-comodule structures 
denoted by ${\mb A}\rightarrow H\ot {\mb A}$, $u\mapsto 
u_{[-1]}\ot u_{[0]}$ and ${\mb A}\rightarrow {\mb A}\ot H$, 
$u\mapsto u_{<0>}\ot u_{<1>}$ for all $u\in {\mb A}$). Define 
on ${\cal A}\ot {\mb A}$ the product 
$(\varphi \nat u)(\varphi '\nat u')=(\varphi \cdot u'_{<1>})(u_{[-1]}
\cdot \varphi ')\nat u_{[0]}u'_{<0>}$,  
for all $\varphi , \varphi '\in {\cal A}$ and $u, u'\in {\mb A}$. Then, by 
\cite{pvo}, this product defines on  ${\cal A}\ot {\mb A}$ a 
structure of associative algebra with unit $1_{{\cal A}}\ot 1_{{\mb A}}$, 
denoted by  ${\cal A}\nat {\mb A}$ and called the {\bf L-R-smash product} 
of ${\cal A}$ and ${\mb A}$. In particular, for ${\mb A}=H$, 
the multiplication of  ${\cal A}\nat H$ is defined by 
\begin{eqnarray*}
&&(\varphi \nat h)(\varphi '\nat h')=(\varphi \cdot h'_2)
(h_1 \cdot \varphi ')\nat h_2h'_1, \;\;\;\forall \;\varphi , \varphi '\in {\cal A}, \;h, h'\in H. 
\end{eqnarray*} 
\section{The definition of the L-R-twisted tensor product of algebras}
\setcounter{equation}{0}
\begin{proposition}\label{LRtwpr}
Let $A$ and $B$ be two (associative unital) algebras, and 
$R:B\otimes A\rightarrow A\otimes B$, 
$Q:A\otimes B\rightarrow A\otimes B$ two linear maps, 
with notation $R(b\otimes a)=a_R\otimes b_R$, 
$Q(a\otimes b)=a_Q\otimes b_Q$, for all $a\in A$, $b\in B$, 
satisfying the following conditions:
\begin{eqnarray}
&&a_R\otimes 1_R=a\otimes 1, \;\;\;\;1_R\otimes b_R=1\otimes b,  
\label{tw0} \\
&&(aa')_R\otimes b_R=a_Ra'_r\otimes b_{R_r}, \label{tw4} \\
&&a_R\otimes (bb')_R=a_{R_r}\otimes b_rb'_R, \label{tw5} \\
&&a_Q\otimes 1_Q=a\otimes 1, \;\;\;\; 1_Q\otimes b_Q=
1\otimes b, \label{tw0'} \\
&&(aa')_Q\otimes b_Q=a_qa'_Q\otimes b_{Q_q}, \label{tw4'} \\
&&a_Q\otimes (bb')_Q=a_{Q_q}\otimes b_Qb'_q, \label{tw5'} \\
&&b_R\otimes a_{R_Q}\otimes b'_Q=b_R\otimes a_{Q_R}\otimes b'_Q, 
\label{comb1} \\
&&a_R\otimes b_{R_Q}\otimes a'_Q=a_R\otimes b_{Q_R}\otimes a'_Q, 
\label{comb2}
\end{eqnarray}
for all $a, a'\in A$ and $b, b'\in B$, where $r$ (respectively $q$) 
is another copy of $R$ (respectively $Q$). 
If we define on $A\otimes B$ a multiplication by 
$(a\otimes b)(a'\otimes b')=a_Qa'_R\otimes b_Rb'_Q$,  
then this multiplication is associative and $1\otimes 1$ is the unit. 
This algebra structure will be denoted by 
$A\; _Q\otimes _RB$ and will be called the 
{\bf  L-R-twisted tensor product} of 
$A$ and $B$ afforded by the maps $R$ and $Q$.
\end{proposition}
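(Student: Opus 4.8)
The plan is to check directly that the proposed multiplication is unital and associative. The unit axiom is immediate: in $(1\otimes 1)(a'\otimes b')=1_Qa'_R\otimes 1_Rb'_Q$ one evaluates $R$ on $1\otimes a'$ and $Q$ on $1\otimes b'$, and the normalizations $a_R\otimes 1_R=a\otimes 1$ from \eqref{tw0} and $1_Q\otimes b_Q=1\otimes b$ from \eqref{tw0'} force $a'_R=a'$, $1_R=1$, $1_Q=1$ and $b'_Q=b'$, so the product collapses to $a'\otimes b'$. The computation of $(a\otimes b)(1\otimes 1)$ is symmetric, using the remaining halves of \eqref{tw0} and \eqref{tw0'}; hence $1\otimes 1$ is a two-sided unit.

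For associativity I would expand both $[(a\otimes b)(a'\otimes b')](a''\otimes b'')$ and $(a\otimes b)[(a'\otimes b')(a''\otimes b'')]$ and compare. Applying the multiplication rule to the inner bracket gives $a_Qa'_R\otimes b_Rb'_Q$ on the left and $a'_Qa''_R\otimes b'_Rb''_Q$ on the right. A second application then forces $R$ and $Q$ onto products of already-twisted elements: on the left one evaluates $R$ on $(b_Rb'_Q)\otimes a''$ and $Q$ on $(a_Qa'_R)\otimes b''$, and on the right $R$ on $b\otimes(a'_Qa''_R)$ and $Q$ on $a\otimes(b'_Rb''_Q)$. This is exactly where the multiplicativity axioms enter: \eqref{tw4} and \eqref{tw5} split the $R$-twisting of a product in the first, respectively the second, tensor factor, while \eqref{tw4'} and \eqref{tw5'} do the same for $Q$, each split introducing a fresh copy of $R$ or $Q$. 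After the expansion is carried out, each of the six letters $a,a',a'',b,b',b''$ carries an explicit chain of $R$- and $Q$-subscripts.

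Keeping track of which copy acts on which letter, both sides yield the same collection of elementary twistings — every left $B$-letter is twisted by $R$ against each later $A$-letter, and every left $A$-letter by $Q$ against each later $B$-letter — and on the outer letters $a,b''$ (which receive two $Q$-twistings) and $a'',b$ (which receive two $R$-twistings) the orders of application already coincide on the two sides. The only genuine difference sits on the two middle letters $a'$ and $b'$, each of which is acted on by one $R$ and one $Q$ but in opposite orders: the left-bracketed product produces $a'_{R_Q}$ and $b'_{Q_R}$, whereas the right-bracketed product produces $a'_{Q_R}$ and $b'_{R_Q}$. These are precisely reconciled by the compatibility conditions, \eqref{comb1} applied to the $a'$-slot (with accompanying factors $b_R$ and $b''_Q$, possibly carrying further spectator subscripts) and \eqref{comb2} applied to the $b'$-slot (with $a''_R$ and $a_Q$). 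After these two substitutions the two expansions become literally identical, establishing associativity.

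The step I expect to be the real obstacle is the index bookkeeping in this second round of expansion: two uses of the multiplication rule together with the four multiplicativity axioms leave every letter decorated with several nested subscripts from distinct copies of $R$ and $Q$, and one must keep the order of application straight so that \eqref{comb1} and \eqref{comb2} can be invoked in exactly the form in which they are stated, with the correct partner factors occupying the neighbouring tensor slots. Conceptually, by contrast, the argument is transparent: the multiplicativity axioms reduce associativity to the single assertion that the $R$- and $Q$-twistings commute when applied to the middle tensorands, and that is precisely the content of \eqref{comb1} and \eqref{comb2}.
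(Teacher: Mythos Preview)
Your proposal is correct and follows essentially the same route as the paper: a direct expansion of both bracketings using the multiplication rule, splitting via (\ref{tw4}), (\ref{tw5}), (\ref{tw4'}), (\ref{tw5'}), and then invoking (\ref{comb1}) and (\ref{comb2}) to swap the $R$/$Q$ order on the middle tensorands $a'$ and $b'$. The paper simply carries this out as an explicit chain of equalities rather than in the narrative form you give, but the argument is the same.
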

\begin{proof}
The fact that $1\otimes 1$ is the unit is obvious using (\ref{tw0}) 
and (\ref{tw0'}), so we only prove the associativity. 
We compute (where $R=r={\cal R}$ and $Q=q=\tilde{Q}$): 
\begin{eqnarray*}
[(a\otimes b)(a'\otimes b')](a''\otimes b'')&=&(a_Qa'_R\otimes b_Rb'_Q)
(a''\otimes b'')\\
&=&(a_Qa'_R)_qa''_r\otimes (b_Rb'_Q)_rb''_q\\
&\overset{(\ref{tw4'})}{=}&a_{Q_q}a'_{R_{\tilde{Q}}}a''_r\otimes 
(b_Rb'_Q)_rb''_{\tilde{Q}_q}\\
&\overset{(\ref{tw5})}{=}&a_{Q_q}a'_{R_{\tilde{Q}}}a''_{{\cal R}_r}
\otimes b_{R_r}b'_{Q_{\cal R}}b''_{\tilde{Q}_q}\\
&\overset{(\ref{comb1}), (\ref{comb2})}{=}&a_{Q_q}a'_{\tilde{Q}_R}
a''_{{\cal R}_r}\otimes b_{R_r}
b'_{{\cal R}_Q}b''_{\tilde{Q}_q},
\end{eqnarray*}
\begin{eqnarray*}
(a\otimes b)[(a'\otimes b')(a''\otimes b'')]&=&(a\otimes b)
(a'_Qa''_R\otimes b'_Rb''_Q)\\
&=&a_q(a'_Qa''_R)_r\otimes b_r(b'_Rb''_Q)_q\\
&\overset{(\ref{tw4})}{=}&a_qa'_{Q_{{\cal R}}}a''_{R_r}\otimes 
b_{{\cal R}_r}(b'_Rb''_Q)_q\\
 &\overset{(\ref{tw5'})}{=}&a_{\tilde{Q}_q}a'_{Q_{{\cal R}}}a''_{R_r}\otimes  
b_{{\cal R}_r}b'_{R_{\tilde{Q}}}b''_{Q_q},
\end{eqnarray*}
and we see that the two terms are equal. 
\end{proof}
\begin{remark}
If $Q=id_{A\otimes B} $, then $A\; _Q\otimes _RB$ is the 
ordinary twisted tensor product of algebras $A\otimes _RB$. 
\end{remark}
\begin{example}
If $H$ is a bialgebra, ${\cal A}$ is an $H$-bimodule algebra and 
${\mb A}$ is an $H$-bicomodule algebra, with notation as before,  
define the maps 
\begin{eqnarray*}
&&R:{\mb A}\otimes {\cal A}\rightarrow {\cal A}\otimes {\mb A}, 
\;\;\;\;R(u\otimes \varphi )=u_{[-1]}\cdot \varphi \otimes u_{[0]}, \\
&&Q: {\cal A}\otimes {\mb A}\rightarrow {\cal A}\otimes {\mb A}, 
\;\;\;\;Q(\varphi \otimes u)=\varphi \cdot u_{<1>}\otimes u_{<0>},
\end{eqnarray*}
for all $\varphi \in {\cal A}$ and $u\in {\mb A}$. 
Then one checks that the maps $R$ and $Q$ satisfy the axioms in 
Proposition \ref{LRtwpr} and the L-R-twisted tensor product  
${\cal A}\; _Q\otimes _R{\mb A}$ coincides with the L-R-smash 
product ${\cal A}\nat {\mb A}$. In particular, for ${\mb A}=H$, 
the above maps are given, 
for all $\varphi \in {\cal A}$, $h\in H$, by 
\begin{eqnarray*}
&&R:H\otimes {\cal A}\rightarrow {\cal A}\otimes H, 
\;\;\;\;R(h\otimes \varphi )=h_1\cdot \varphi \otimes h_2, \\
&&Q: {\cal A}\otimes H\rightarrow {\cal A}\otimes H, 
\;\;\;\;Q(\varphi \otimes h)=\varphi \cdot h_2\otimes h_1.
\end{eqnarray*}
\end{example}

A particular case of Proposition \ref{LRtwpr} is obtained if 
$R$ is the flip map $b\otimes a\mapsto a\otimes b$:
\begin{corollary}
Let $A$, $B$ be two algebras and $Q:A\otimes B\rightarrow 
A\otimes B$ a linear map satisfying the conditions 
(\ref{tw0'}), (\ref{tw4'}) and (\ref{tw5'}). 
Then the multiplication $(a\otimes b)(a'\otimes b')=a_Qa'\otimes bb'_Q$ 
defines an associative algebra structure on $A\otimes B$ 
with unit $1\otimes 1$, denoted by $A\; _Q\otimes B$. 
\end{corollary}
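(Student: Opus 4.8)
The plan is to obtain this Corollary as a direct specialization of Proposition \ref{LRtwpr}, taking $R$ to be the flip map $\tau:B\otimes A\to A\otimes B$, $\tau(b\otimes a)=a\otimes b$. In the subscript notation $R(b\otimes a)=a_R\otimes b_R$ this simply means $a_R=a$ and $b_R=b$, so that applying $R$ (or its second copy $r$) has no effect on the elements. Under this choice the multiplication of Proposition \ref{LRtwpr}, namely $(a\otimes b)(a'\otimes b')=a_Qa'_R\otimes b_Rb'_Q$, becomes $a_Qa'\otimes bb'_Q$, which is precisely the multiplication in the statement. It therefore suffices to check that $\tau$ and the given $Q$ together satisfy all eight hypotheses of Proposition \ref{LRtwpr}.

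First I would dispose of the three conditions (\ref{tw0}), (\ref{tw4}), (\ref{tw5}) that involve $R$ alone. Since every occurrence of a subscript $R$ or $r$ leaves its element untouched, each of these collapses to a tautology: for instance (\ref{tw4}) reads $aa'\otimes b=aa'\otimes b$ and (\ref{tw5}) reads $a\otimes bb'=a\otimes bb'$. The three conditions (\ref{tw0'}), (\ref{tw4'}), (\ref{tw5'}) involving $Q$ alone are exactly the hypotheses we have assumed on $Q$, so there is nothing to verify there.

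The only point deserving a moment of care is the pair of mixed compatibility conditions (\ref{comb1}) and (\ref{comb2}), and I would settle them by tracking the order in which $R$ and $Q$ are applied. For (\ref{comb1}) the left-hand side applies $R$ to $b\otimes a$ and then $Q$ to the resulting first factor paired with $b'$, whereas the right-hand side applies $Q$ to $a\otimes b'$ first and then $R$ to $b$ paired with the resulting first factor; because $\tau$ does nothing to the elements, both sides reduce to $b\otimes a_Q\otimes b'_Q$. The same bookkeeping applied to (\ref{comb2}) yields $a\otimes b_Q\otimes a'_Q$ on both sides. With all eight hypotheses verified, the associativity of the multiplication and the unitality of $1\otimes 1$ follow at once from Proposition \ref{LRtwpr}. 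I anticipate no genuine obstacle: the whole argument amounts to the observation that the flip satisfies the $R$-axioms trivially and renders (\ref{comb1})--(\ref{comb2}) vacuous, so that the Corollary is simply the flip-instance of the already established Proposition.
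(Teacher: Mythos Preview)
Your proposal is correct and follows exactly the approach of the paper, which simply introduces the Corollary with the sentence ``A particular case of Proposition \ref{LRtwpr} is obtained if $R$ is the flip map $b\otimes a\mapsto a\otimes b$'' and gives no further proof. Your explicit verification that the flip trivially satisfies (\ref{tw0}), (\ref{tw4}), (\ref{tw5}) and renders (\ref{comb1})--(\ref{comb2}) vacuous is precisely the content the paper leaves implicit.
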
 
\begin{remark}
Let $A$, $B$ be two algebras and $Q:A\otimes B\rightarrow 
A\otimes B$ a linear map, with notation $Q(a\ot b)=a_Q\ot b_Q$. 
Define the map $Q^{op}:B\ot A\rightarrow A\ot B$, 
$Q^{op}(b\ot a)=a_Q\ot b_Q$. Then one can easily check that 
$Q$ satisfies the conditions   
(\ref{tw0'}), (\ref{tw4'}) and (\ref{tw5'}) if and only if 
$Q^{op}$ is a twisting map between the opposite algebras 
$A^{op}$ and $B^{op}$ and in this case we have an algebra 
isomorphism  $A\; _Q\otimes B\equiv (A^{op}\ot _{Q^{op}}B^{op})^{op}$, 
given by the trivial identification. 
\end{remark}
\begin{remark}
If  $A\; _Q\otimes _RB$ is an L-R-twisted tensor product of algebras, 
we can consider also the algebras 
$A\otimes _RB$ and $A\; _Q\otimes B$.
\end{remark}

We recall the following concept and result from \cite{lpvo}:
\begin{proposition} \label{scheme}
Let $D$ be an algebra with multiplication denoted by
$\mu _D=\mu $ and $T:D\ot D\rightarrow D\ot D$ a linear map satisfying the
following conditions: $T(1\ot d)=1\ot d$, $T(d\ot 1)=d\ot 1$, for all
$d\in D$, and
\begin{eqnarray}
&&\mu _{23}\circ T_{13}\circ T_{12}=
T\circ \mu _{23}:D\ot D\ot D\rightarrow D\ot D, \label{dec1} \\
&&\mu _{12}\circ T_{13}\circ T_{23}=
T\circ \mu _{12}:D\ot D\ot D\rightarrow D\ot D,  \label{dec2}\\
&&T_{12}\circ T_{23}=T_{23}\circ T_{12}:
D\ot D\ot D\rightarrow D\ot D\ot D, \label{dec3}
\end{eqnarray}
with standard notation for $T_{ij}$ and $\mu _{ij}$. 
Then the bilinear map $\mu \circ T:D\ot D\rightarrow D$ is another
associative algebra structure on $D$ (with the same unit 1) denoted by 
$D^T$, and the map $T$ is 
called a {\bf twistor} for $D$.
\end{proposition}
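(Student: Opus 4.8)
The plan is to verify the three twistor axioms directly from the given hypotheses on $T$, expressed componentwise on elements. Write $T(d\ot d')=d_T\ot d'_T$ with a second copy $t$ of $T$, so that $T(d\ot d')=d_t\ot d'_t$ as well. First I would translate each of the conditions (\ref{dec1}), (\ref{dec2}), (\ref{dec3}) into element notation: (\ref{dec1}) says that applying $T$ in the $(1,3)$ slots, then $T$ in the $(1,2)$ slots, then multiplying the second and third tensorands, equals applying $\mu$ to the second and third tensorands first and then $T$; similarly for (\ref{dec2}) with the roles of left/right multiplication interchanged; and (\ref{dec3}) is the hexagon-type commutation $T_{12}T_{23}=T_{23}T_{12}$. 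These are exactly the componentwise identities one needs to push $T$ past a product on either side.

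The core of the proof is to check associativity of $\mu\circ T$, i.e.\ that $(d\ast d')\ast d''=d\ast(d'\ast d'')$ where $d\ast d'=(\mu\circ T)(d\ot d')=d_Td'_T$ (product taken in $D$). I would expand the left-hand side by first forming $d_Td'_T$ and then applying $T$ together with $d''$; the key move is to use (\ref{dec1}) (in the guise ``$T$ applied to a product in the first slot of a pair equals $T_{13}T_{12}$ followed by multiplying slots $2,3$'') to distribute $T$ across the product $d_Td'_T$ before multiplying in $d''$. Symmetrically, I would expand the right-hand side $d\ast(d'_Td''_T)$ using (\ref{dec2}) to distribute $T$ across the product $d'_Td''_T$. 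After these two reductions both sides become expressions involving three nested copies of $T$ applied to $d\ot d'\ot d''$ followed by a double multiplication in $D$; the associativity of $\mu$ in $D$ reconciles the bracketing, and the only remaining discrepancy is the order in which two of the three $T$-copies act on overlapping slots.

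That remaining discrepancy is precisely where I expect the main obstacle to lie, and it is resolved by the commutation axiom (\ref{dec3}), $T_{12}T_{23}=T_{23}T_{12}$: it allows me to interchange the two $T$-applications whose indices overlap so that the triple-$T$ expressions coming from the two sides coincide. So the skeleton is: apply (\ref{dec1}) on one side and (\ref{dec2}) on the other to split $T$ across the internal product, invoke associativity of $\mu$ in $D$ to match the multiplication pattern, and finally use (\ref{dec3}) to align the two orders of $T$-action. The unit condition is immediate from $T(1\ot d)=1\ot d$ and $T(d\ot 1)=d\ot 1$, since then $1\ast d=1_Td_T=1\cdot d=d$ and likewise $d\ast 1=d$, using that $1$ is the unit of $\mu$.

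I would carry out the bookkeeping using the standard leg notation $T_{ij}$ and $\mu_{ij}$ rather than elementwise subscripts if the overlaps become hard to track, since (\ref{dec1})--(\ref{dec3}) are already stated in that form; working at the level of maps $D\ot D\ot D\to D$ makes the three rewrites cleaner and avoids the proliferation of copies $T,t$ needed in the elementwise approach. The verification is then a short diagram chase: one checks that both composites $D\ot D\ot D\to D$ agree after inserting (\ref{dec1}), (\ref{dec2}), the associativity of $\mu$, and (\ref{dec3}) in turn.
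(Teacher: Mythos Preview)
The paper does not prove this proposition: it is introduced with ``We recall the following concept and result from \cite{lpvo}'' and is simply stated without argument. Your outline is the standard proof and is correct in structure: expand each side of $(d\ast d')\ast d''=d\ast(d'\ast d'')$ as a composite $D\ot D\ot D\to D$, use one of (\ref{dec1}), (\ref{dec2}) on each side to split $T$ across the inner product, invoke associativity of $\mu$ to identify $\mu\circ\mu_{12}$ with $\mu\circ\mu_{23}$, and finish with (\ref{dec3}) to commute $T_{12}$ past $T_{23}$.

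One small bookkeeping slip to fix: you have the roles of (\ref{dec1}) and (\ref{dec2}) interchanged. The left-hand side $(d\ast d')\ast d''$ involves $T\circ\mu_{12}$ (the inner product sits in the \emph{first} argument of the outer $T$), so it is (\ref{dec2}), $T\circ\mu_{12}=\mu_{12}\circ T_{13}\circ T_{23}$, that applies there; symmetrically (\ref{dec1}) handles the right-hand side. Also be careful with the reading order of compositions: $\mu_{23}\circ T_{13}\circ T_{12}$ means $T_{12}$ first, then $T_{13}$, then $\mu_{23}$. With those labels corrected, your diagram chase goes through exactly as you describe: both sides reduce to $\mu\circ\mu_{12}\circ T_{13}\circ T_{23}\circ T_{12}=\mu\circ\mu_{23}\circ T_{13}\circ T_{12}\circ T_{23}$, and (\ref{dec3}) closes the gap.
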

\begin{proposition}
Let  $A\; _Q\otimes _RB$ be an L-R-twisted tensor product of algebras. 
Define the maps $T_1, T_2, T_3:
(A\otimes B)\otimes (A\otimes B)\rightarrow (A\otimes B)\otimes 
(A\otimes B)$ by
\begin{eqnarray*}
&&T_1((a\otimes b)\otimes (a'\otimes b'))=(a_Q\otimes b_R)\otimes 
(a'_R\otimes b'_Q), \\
&&T_2((a\otimes b)\otimes (a'\otimes b'))=(a_Q\otimes b)\otimes 
(a'\otimes b'_Q), \\
&&T_3((a\otimes b)\otimes (a'\otimes b'))=(a\otimes b_R)\otimes 
(a'_R\otimes b'). 
\end{eqnarray*}
Then $T_1$ is a twistor for $A\otimes B$, $T_2$ is a twistor for 
$A\otimes _RB$, $T_3$ is a twistor for 
$A\; _Q\otimes B$ and moreover we have $A\; _Q\otimes _RB=
(A\otimes B)^{T_1}=(A\otimes _RB)^{T_2}=
(A\; _Q\otimes B)^{T_3}$.
\end{proposition}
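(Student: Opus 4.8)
The plan is to verify each of the three twistor claims by checking the defining conditions of \prref{scheme}, and then to show the four algebra structures coincide. Since each $T_i$ is built from the maps $R$ and/or $Q$ acting on the ``inner'' factors, and since the multiplication on the relevant base algebra ($A\ot B$, $A\ot_RB$, or $A\;_Q\ot B$) is the ordinary tensor-product, twisted, or $Q$-twisted multiplication respectively, most of the work reduces to translating the hypotheses (\ref{tw0})--(\ref{comb2}) into the three twistor axioms (\ref{dec1})--(\ref{dec3}). First I would dispose of the unit conditions $T_i(1\ot d)=1\ot d$ and $T_i(d\ot 1)=d\ot 1$: for $d=a'\ot b'$ and unit $1=1_A\ot 1_B$, these follow immediately from the normalization axioms (\ref{tw0}) and (\ref{tw0'}) that kill $R$ and $Q$ against units.

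Next I would check the final identity $A\;_Q\ot_RB=(A\ot B)^{T_1}=(A\ot_RB)^{T_2}=(A\;_Q\ot B)^{T_3}$, which is really a definitional matter: applying $\mu\circ T_i$ means first applying $T_i$ and then multiplying in the base algebra. For $T_1$ on $A\ot B$ with the plain tensor multiplication, $(a_Q\ot b_R)(a'_R\ot b'_Q)=a_Qa'_R\ot b_Rb'_Q$, which is exactly the L-R-twisted product; for $T_2$ on $A\ot_RB$ the multiplication already inserts an $R$ between the inner $B$ and $A$ factors, and $T_2$ supplies the $Q$, again reproducing $a_Qa'_R\ot b_Rb'_Q$; symmetrically for $T_3$ on $A\;_Q\ot B$. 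So all three recover the same product, and this part is a short computation once the conventions are pinned down.

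The substance lies in verifying (\ref{dec1})--(\ref{dec3}) for each $T_i$. Here I would work componentwise on $(A\ot B)^{\ot 3}$, tracking how $T_{i,12}$, $T_{i,13}$, $T_{i,23}$ and the base multiplications $\mu_{12}$, $\mu_{23}$ distribute the $R$'s and $Q$'s across the three inner $A$-slots and three inner $B$-slots. The multiplicativity axioms (\ref{tw4}), (\ref{tw5}) for $R$ and (\ref{tw4'}), (\ref{tw5'}) for $Q$ are precisely what is needed to match the decomposition conditions (\ref{dec1}), (\ref{dec2}), which express compatibility of the twistor with the multiplication. The commutation condition (\ref{dec3}), $T_{i,12}\circ T_{i,23}=T_{i,23}\circ T_{i,12}$, is where the mixed axioms (\ref{comb1}) and (\ref{comb2}) enter for $T_1$: applying the twistor in the two orders produces terms $a_{R_Q}$ versus $a_{Q_R}$ (and the analogous $b$-terms), and (\ref{comb1}), (\ref{comb2}) assert exactly that these agree. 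For $T_2$ and $T_3$, which involve only $Q$ or only $R$, the condition (\ref{dec3}) degenerates and follows from the fact that $Q$ (respectively $R$) acts on disjoint tensor legs in the two factors, so no $R$--$Q$ interaction is needed.

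I expect the main obstacle to be bookkeeping rather than conceptual: keeping straight which copy of $R$ or $Q$ (the letters $R,r,\mathcal{R}$ and $Q,q,\tilde Q$ of the earlier proof) acts on which of the three inner factors when the $T_{ij}$ and $\mu_{ij}$ are composed, and confirming that each required identity lands on exactly one of the eight hypotheses. The delicate point is that in (\ref{dec1}) and (\ref{dec2}) the multiplication $\mu_{23}$ or $\mu_{12}$ is the base-algebra multiplication, which for $T_2$ and $T_3$ itself contains an $R$; so for those two cases I would need to be careful that the $R$ built into the base product and the $Q$ (resp.\ $R$) supplied by the twistor do not collide, and that their independence is guaranteed by the mixed axioms or by acting on separate legs. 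Once the index discipline is fixed, each verification is a direct, if lengthy, application of the already-established relations.
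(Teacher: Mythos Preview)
Your approach is correct and matches the paper's, which dismisses the result as a straightforward computation using relations (\ref{tw0})--(\ref{tw5'}). Your identification of where the mixed axioms (\ref{comb1}) and (\ref{comb2}) enter --- for condition (\ref{dec3}) of $T_1$, and for conditions (\ref{dec1}), (\ref{dec2}) of $T_2$ and $T_3$ where the base multiplication itself already carries an $R$ or $Q$ --- is in fact more precise than the paper's one-line reference.
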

\begin{proof}
Straightforward computations, using the relations (\ref{tw0})--(\ref{tw5'}).
\end{proof}
\begin{proposition} \label{izobij}
Let $A\; _Q\otimes _RB$ be an L-R-twisted tensor product of algebras 
such that $Q$ is bijective with inverse $Q^{-1}$. 
Then the map $P:B\otimes A\rightarrow A\otimes B$, $P=Q^{-1}\circ R$, 
is a twisting map, and we have an algebra isomorphism 
$Q:A\otimes _PB\rightarrow A\; _Q\otimes _RB$. Thus, an L-R-twisted 
tensor product with bijective $Q$ is 
isomorphic to an ordinary twisted tensor product.  
\end{proposition}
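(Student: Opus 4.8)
The statement has two parts—that $P=Q^{-1}\circ R$ is a twisting map and that $Q$ is the asserted isomorphism—and the plan is to treat them in that order, since the algebra $A\otimes_P B$ must be defined before one can speak of a map into it. For the second part I note at the outset that it suffices to prove $Q$ is a \emph{unital algebra homomorphism} $A\otimes_P B\to A\; _Q\otimes_R B$: as $Q$ is bijective by hypothesis, its inverse is then automatically an algebra map and $Q$ is an isomorphism. Throughout I will use that (\ref{tw0'}) gives $Q(a\otimes 1)=a\otimes 1$ and $Q(1\otimes b)=1\otimes b$, hence the same two identities for $Q^{-1}$.

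The normalizations are immediate. Writing $P(b\otimes a)=a_P\otimes b_P=Q^{-1}(a_R\otimes b_R)$, the unit axioms (\ref{tw0}) for $P$ come from one-line computations: $P(1\otimes a)=Q^{-1}(R(1\otimes a))=Q^{-1}(a\otimes 1)=a\otimes 1$, by (\ref{tw0}) for $R$ and the normalization of $Q^{-1}$ just recorded, and symmetrically $P(b\otimes 1)=1\otimes b$. The same normalizations show $Q(1\otimes 1)=1\otimes 1$, so $Q$ will carry the unit of $A\otimes_P B$ to that of $A\; _Q\otimes_R B$.

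The real content is in the multiplicative conditions, and the main obstacle is structural: $P$ is manufactured from the \emph{inverse} $Q^{-1}$, while every tool available to me—(\ref{tw4}), (\ref{tw5}), (\ref{tw4'}), (\ref{tw5'}) and the commutation relations (\ref{comb1}), (\ref{comb2})—speaks only of $R$ and $Q$. The device that resolves this is uniform: to prove a desired identity whose two sides lie in $A\otimes B$, I apply the bijection $Q$ and use $Q\circ Q^{-1}=id$ to eliminate the outermost inverse, thereby reducing the claim to an identity involving only $R$ and $Q$, which the hypotheses can reach. Concretely, the twisting axioms (\ref{tw4}), (\ref{tw5}) for $P$ become, after applying $Q$ and invoking (\ref{tw4}), (\ref{tw5}) for $R$, identities asserting that a nested expression in $R$, $Q$, $Q^{-1}$ equals a pure $R$-expression; I dispatch these by pushing the outer $Q$ through the internal products with (\ref{tw4'}), (\ref{tw5'}) and then using (\ref{comb1}), (\ref{comb2}) to bring a copy of $Q$ into position against the buried $Q^{-1}$, whereupon $Q\circ Q^{-1}=id$ collapses the two.

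With $P$ shown in this way to be a twisting map, $A\otimes_P B$ is a genuine twisted tensor product with product $(a\otimes b)(a'\otimes b')=a\,a'_P\otimes b_P\,b'$, and it remains to verify the single identity
\[
Q\big(a\,a'_P\otimes b_P\,b'\big)=Q(a\otimes b)\,Q(a'\otimes b'),
\]
with the right-hand product formed in $A\; _Q\otimes_R B$. I would prove it by the same method: expand $Q$ over the two inner products on the left by (\ref{tw4'}) and (\ref{tw5'}), expand the right-hand side by the multiplication rule of $A\; _Q\otimes_R B$, and reconcile them by (\ref{comb1}), (\ref{comb2}), the point again being that these relations align the copy of $Q$ coming from the product of $A\; _Q\otimes_R B$ with the copy of $Q^{-1}$ hidden inside $a'_P\otimes b_P$, so that they cancel. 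Together with $Q(1\otimes 1)=1\otimes 1$, this says precisely that $Q$ is a unital algebra homomorphism, hence by bijectivity an isomorphism; since by Proposition \ref{LRtwpr} the target is associative, this also re-proves the associativity of $A\otimes_P B$ by transport of structure along $Q$. I expect no conceptual difficulty beyond the bookkeeping of the many copies of $R$ and $Q$—specifically, checking that the maps destined to cancel really are applied to the same tensor legs, which is exactly the content of (\ref{comb1}) and (\ref{comb2}).
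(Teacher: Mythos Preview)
Your proposal is correct and follows essentially the same route as the paper: for the twisting axioms of $P$ you apply $Q$ to both sides, use (\ref{tw4'}) (resp.\ (\ref{tw5'})) to distribute $Q$ over the product, cancel one $Q$ against the buried $Q^{-1}$, invoke (\ref{comb2}) (resp.\ (\ref{comb1})) to slide the remaining $Q$ past an $R$ so it meets the other $Q^{-1}$, cancel again, and finish with (\ref{tw4}) (resp.\ (\ref{tw5})); for the multiplicativity of $Q$ you expand with (\ref{tw4'}) and (\ref{tw5'}), use $Q\circ P=R$ to eliminate $Q^{-1}$, and then (\ref{comb1}), (\ref{comb2}) to match the product in $A\;_Q\otimes_R B$. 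The paper carries out exactly these computations in subscript notation.
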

\begin{proof}
First, it is obvious that $a_P\otimes 1=a\otimes 1$ and 
$1_P\otimes b_P=1\otimes b$, for all $a\in A$, $b\in B$. 
We check now that $P$ is a twisting map. Let $a, a'\in A$, $b, b'\in B$; 
we denote 
$Q^{-1}(a\otimes b)=a_{Q^{-1}}\otimes b_{Q^{-1}}=a_{q^{-1}}\otimes 
b_{q^{-1}}$ and 
$P(b\otimes a)=a_P\otimes b_P=a_p\otimes b_p$. We compute (denote $Q=q=\overline{Q}=
\overline{q}$):
\begin{eqnarray*}
Q(a_Pa'_p\otimes b_{P_p})&=&(a_{R_{Q^{-1}}}a'_{r_{q^{-1}}})_Q
\otimes (b_{R_{Q^{-1}_{r_{q^{-1}}}}})_Q\\
&\overset{(\ref{tw4'})}{=}&a_{R_{Q^{-1}_Q}}a'_{r_{q^{-1}_{\overline{Q}}}}
\otimes b_{R_{Q^{-1}_{r_{q^{-1}_{\overline{Q}_Q}}}}}\\
&=&a_{R_{Q^{-1}_Q}}a'_r\otimes b_{R_{Q^{-1}_{r_Q}}}\\
&\overset{(\ref{comb2})}{=}&a_{R_{Q^{-1}_Q}}a'_r\otimes 
b_{R_{Q^{-1}_{Q_r}}}\\ 
&=&a_Ra'_r\otimes b_{R_r}\\
&\overset{(\ref{tw4})}{=}&(aa')_R\otimes b_R,
\end{eqnarray*}
hence by applying $Q^{-1}$ we obtain $(aa')_P\otimes 
b_P=a_Pa'_p\otimes b_{P_p}$, that is (\ref{tw4}) for $P$. 
The fact that $P$ satisfies (\ref{tw5}) can be proved similarly. 
The only thing left to prove is that $Q$ is an algebra map. 
We compute: 
\begin{eqnarray*}
Q((a\otimes b)(a'\otimes b'))&=&(aa'_P)_Q\otimes (b_Pb')_Q\\
&\overset{(\ref{tw4'})}{=}&a_qa'_{P_Q}\otimes (b_Pb')_{Q_q}\\
&\overset{(\ref{tw5'})}{=}&a_qa'_{P_{Q_{\overline{Q}}}}
\otimes (b_{P_Q}b'_{\overline{Q}})_q\\
&\overset{(\ref{tw5'})}{=}&a_{q_{\overline{q}}}a'_{P_{Q_{\overline{Q}}}}
\otimes b_{P_{Q_q}}b'_{\overline{Q}_{\overline{q}}}\\
&=&a_{q_{\overline{q}}}a'_{R_{\overline{Q}}}\otimes b_{R_q}
b'_{\overline{Q}_{\overline{q}}}\\
&\overset{(\ref{comb1})}{=}&a_{q_{\overline{q}}}a'_{\overline{Q}_R}
\otimes b_{R_q}b'_{\overline{Q}_{\overline{q}}}\\
&\overset{(\ref{comb2})}{=}&a_{q_{\overline{q}}}a'_{\overline{Q}_R}
\otimes b_{q_R}b'_{\overline{Q}_{\overline{q}}}\\
&=&(a_q\otimes b_q)(a'_{\overline{Q}}\otimes b'_{\overline{Q}})\\
&=&Q(a\otimes b)Q(a'\otimes b'),
\end{eqnarray*} 
finishing the proof.
\end{proof}
\begin{remark}
Proposition \ref{izobij} generalizes (and was inspired by) the 
following result in \cite{pvo}. Let $H$ be a Hopf algebra 
with bijective antipode $S$ and ${\cal A}$ an $H$-bimodule algebra. 
Consider the so-called diagonal crossed product 
(cf. \cite{bpvo}, \cite{hn}) ${\cal A}\bowtie H$, which is an 
associative algebra built on ${\cal A}\otimes H$, with 
multiplication defined by $(\varphi \bowtie h)(\varphi '\bowtie h')=
\varphi (h_1\cdot \varphi '\cdot S^{-1}(h_3))\bowtie 
h_2h'$, for all $\varphi , \varphi '\in {\cal A}$ and $h, h'\in H$, 
that is, ${\cal A}\bowtie H$ is the twisted tensor product 
${\cal A}\otimes _PH$, where $P:H\otimes {\cal A}\rightarrow 
{\cal A}\otimes H$, $P(h\otimes \varphi )=
h_1\cdot \varphi \cdot S^{-1}(h_3)\otimes h_2$. Then the 
map $Q:{\cal A}\bowtie H\rightarrow {\cal A}\nat H$, 
$Q(\varphi \bowtie h)=\varphi \cdot h_2\nat h_1$, is an algebra 
isomorphism. 
\end{remark}

The next result generalizes the corresponding one 
for twisted tensor products (cf. \cite{bm}):
\begin{proposition}
Let $A\; _Q\ot _RB$ and $A'\; _{Q'}\ot _{R'}B'$ be two L-R-twisted 
tensor products 
of algebras and $f:A\rightarrow A'$ and $g:B\rightarrow B'$ two algebra 
maps satisfying the conditions $(f\ot g)\circ R=R'\circ (g\ot f)$ and 
$(f\ot g)\circ Q=Q'\circ (f\ot g)$. Then $f\ot g$ is an algebra map from 
$A\; _Q\ot _RB$ to $A'\; _{Q'}\ot _{R'}B'$.
\end{proposition}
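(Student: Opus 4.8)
The plan is to show that $f\otimes g$ respects multiplication by a direct computation, translating the two intertwining hypotheses into their element-wise (Sweedler-style) forms and pushing them through the product formula. First I would record what the two compatibility conditions say on elements. Writing $R(b\otimes a)=a_R\otimes b_R$ and $R'(b'\otimes a')=a'_{R'}\otimes b'_{R'}$, the condition $(f\otimes g)\circ R=R'\circ(g\otimes f)$ reads $f(a_R)\otimes g(b_R)=f(a)_{R'}\otimes g(b)_{R'}$ for all $a\in A$, $b\in B$; similarly $(f\otimes g)\circ Q=Q'\circ(f\otimes g)$ gives $f(a_Q)\otimes g(b_Q)=f(a)_{Q'}\otimes g(b)_{Q'}$.

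Next I would start from the left-hand side. Using the multiplication rule of $A\;_Q\otimes_RB$,
\begin{eqnarray*}
(f\otimes g)\bigl((a\otimes b)(a'\otimes b')\bigr)
&=&(f\otimes g)(a_Qa'_R\otimes b_Rb'_Q)\\
&=&f(a_Q)f(a'_R)\otimes g(b_R)g(b'_Q),
\end{eqnarray*}
where the last step uses that $f$ and $g$ are algebra maps. Now I would apply the two element-wise identities: $f(a_Q)\otimes g(b_R)$ is unaffected while the pairing structure lets me replace $f(a_Q)$ and $g(b'_Q)$ using the $Q$-compatibility and $f(a'_R)$, $g(b_R)$ using the $R$-compatibility. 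The point I must be careful about is that the $R$-identity moves the subscript on the pair coming from $g\otimes f$, so the correct bookkeeping is $f(a'_R)\otimes g(b_R)=f(a')_{R'}\otimes g(b)_{R'}$ and $f(a_Q)\otimes g(b'_Q)=f(a)_{Q'}\otimes g(b')_{Q'}$, applied so that the $R'$ and $Q'$ labels land on the right factors.

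After these substitutions the expression becomes $f(a)_{Q'}f(a')_{R'}\otimes g(b)_{R'}g(b')_{Q'}$, which is exactly the product $(f(a)\otimes g(b))(f(a')\otimes g(b'))$ in $A'\;_{Q'}\otimes_{R'}B'$, i.e. $(f\otimes g)(a\otimes b)\cdot(f\otimes g)(a'\otimes b')$. Unit preservation is immediate: $(f\otimes g)(1\otimes 1)=f(1)\otimes g(1)=1\otimes 1$ since $f,g$ are unital. I expect no genuine obstacle here; the only mild subtlety is matching up which copy of $R$ versus $R'$ (and $Q$ versus $Q'$) carries each Sweedler subscript, so that the two intertwining hypotheses are applied in the correct slots. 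Because $R$ and $Q$ act on independent tensor positions in the product formula, no interaction between them (and hence none of the mixed axioms \eqref{comb1}, \eqref{comb2}) is needed for this statement.
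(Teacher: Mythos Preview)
Your proposal is correct and follows essentially the same direct computation as the paper: expand $(f\otimes g)((a\otimes b)(a'\otimes b'))$, use that $f,g$ are multiplicative, apply the element-wise intertwining identities for $R$ and $Q$, and recognize the result as the product in $A'\;_{Q'}\otimes_{R'}B'$. Your remarks on unit preservation and on why the mixed axioms (\ref{comb1}), (\ref{comb2}) are not needed are accurate and slightly more explicit than the paper's version.
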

\begin{proof}
It is obvious that $f\ot g$ is unital, so we only prove that it is 
multiplicative. Let $a, x\in A$ and $b, y\in B$; we compute:
\begin{eqnarray*}
(f\ot g)((a\ot b)(x\ot y))&=&(f\ot g)(a_Qx_R\ot b_Ry_Q)\\
&=&f(a_Q)f(x_R)\ot g(b_R)g(y_Q)\\
&=&f(a)_{Q'}f(x)_{R'}\ot g(b)_{R'}g(y)_{Q'}\\
&=&(f(a)\ot g(b))(f(x)\ot g(y))\\
&=&(f\ot g)(a\ot b)(f\ot g)(x\ot y), 
\end{eqnarray*}
finishing the proof. 
\end{proof}
\section{Iterated L-R-twisted tensor products of algebras}
\setcounter{equation}{0}
${\;\;\;}$
It was proved in \cite{jlpvo} that, under certain circumstances, twisted tensor 
products of algebras may be iterated. More precisely: 
\begin{theorem}(\cite{jlpvo})\label{itertw}
Let $A\ot _{R_1}B$, $B\ot _{R_2}C$ and $A\ot _{R_3}C$ be 
twisted tensor products 
of algebras. Define the maps 
\begin{eqnarray*}
&&T_1: C\ot (A\ot _{R_1}B)\rightarrow  (A\ot _{R_1}B)\ot C, \;\;\;T_1=
(id_A\ot R_2)\circ (R_3\ot id_B), \\
&&T_2: (B\ot _{R_2}C)\ot A\rightarrow A\ot  (B\ot _{R_2}C), \;\;\;T_2=
(R_1\ot id_C)\circ (id_B\ot R_3), 
\end{eqnarray*}
and assume that $R_1$, $R_2$, $R_3$ satisfy the following compatibility 
condition (the hexagon equation): 
\begin{eqnarray*}
&&(id_A\ot R_2)\circ (R_3\ot id_B)\circ (id_C\ot R_1)=
(R_1\ot id_C)\circ (id_B\ot R_3)
\circ (R_2\ot id_A).
\end{eqnarray*}
Then $T_1$ is a twisting map between $A\ot _{R_1}B$ and 
$C$, $T_2$ is a twisting map between 
$A$ and $B\ot _{R_2}C$ and moreover the algebras 
$(A\ot _{R_1}B)\ot _{T_1}C$ and 
$A\ot _{T_2}(B\ot _{R_2}C)$ coincide. 
\end{theorem}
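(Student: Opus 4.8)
The plan is to verify the three twisting-map/hexagon hypotheses can be recast so that Theorem~\ref{itertw} (the iteration result for ordinary twisted tensor products) applies after we bundle the data appropriately. Since the theorem statement I am asked to prove \emph{is} Theorem~\ref{itertw} itself, I would prove it directly by checking the twisting-map axioms (\ref{tw0})--(\ref{tw5}) for $T_1$ and $T_2$ and then establishing that the two iterated multiplications agree. First I would record the explicit formulas. Writing $R_1(b\ot a)=a_{R_1}\ot b_{R_1}$, $R_2(c\ot b)=b_{R_2}\ot c_{R_2}$, $R_3(c\ot a)=a_{R_3}\ot c_{R_3}$, the definitions give
\begin{eqnarray*}
&&T_1(c\ot (a\ot b))=(a_{R_3}\ot b_{R_2})\ot c_{{R_3}{R_2}}, \\
&&T_2((b\ot c)\ot a)=a_{{R_1}{R_3}}\ot (b_{R_1}\ot c_{R_3}).
\end{eqnarray*}
The unit conditions for $T_1$ and $T_2$ are immediate from the unit conditions on $R_1,R_2,R_3$, so the content is in the two multiplicativity axioms (\ref{tw4}), (\ref{tw5}) for each map.

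Next I would check that $T_1$ is a twisting map between $A\ot_{R_1}B$ and $C$. The ``$C$-side'' axiom, namely multiplicativity in the $C$-variable, reduces after unwinding to the statement that $R_2$ and $R_3$ are each multiplicative in $c$, which is axiom (\ref{tw5}) for $R_2$ and $R_3$ separately and requires no interaction between them. The ``$(A\ot_{R_1}B)$-side'' axiom is the delicate one: expanding $(a\ot b)(a'\ot b')=a{a'}_{R_1}\ot b_{R_1}b'$ and demanding compatibility with $T_1$ forces the relation
\[
(id_A\ot R_2)\circ(R_3\ot id_B)\circ(id_C\ot R_1)
=(R_1\ot id_C)\circ(id_B\ot R_3)\circ(R_2\ot id_A),
\]
which is precisely the hexagon hypothesis. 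So the hexagon is exactly what is needed here, and the same hexagon, read from the other end, delivers the corresponding axiom making $T_2$ a twisting map between $A$ and $B\ot_{R_2}C$. I expect this correspondence --- that a single hexagon equation simultaneously supplies the one nontrivial axiom for each of $T_1$ and $T_2$ --- to be the main conceptual point, and carefully matching the tensor-leg bookkeeping on both sides is where the real work lies.

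Finally, to see that the two triple products coincide, I would compute both $((a\ot b)\ot_{T_1}c)$-type products on $(A\ot_{R_1}B)\ot_{T_1}C$ and on $A\ot_{T_2}(B\ot_{R_2}C)$ applied to a general $(a\ot b\ot c)(a'\ot b'\ot c')$, reducing each to an expression in $A\ot B\ot C$ with $R_1,R_2,R_3$ applied in some order. Both sides produce $a\,{a'}_{\,\cdots}\ot b_{\cdots}{b'}_{\,\cdots}\ot c_{\cdots}c'$ where the subscript strings differ only by the order in which $R_1,R_3$ (resp.\ $R_2,R_3$) are applied to the middle terms, and the hexagon equation is exactly the identity that reconciles the two orders. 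The hard part will be the hexagon verification for the mixed axiom of $T_1$ (and symmetrically $T_2$): one must apply $R_1$ to the incoming product $a{a'}_{R_1}$ living inside the first tensor factor while $T_1$ is simultaneously moving $c$ past both $A$ and $B$, and tracking which copy of $R_3$ and $R_2$ acts on which leg is the step most prone to error. Once that single identity is in place, associativity and the equality of the two iterated algebras follow by routine substitution.
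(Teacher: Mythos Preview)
Your outline is essentially the standard proof and matches what the paper does (the paper itself cites this theorem from \cite{jlpvo} without proof, but the argument you sketch is exactly the one used in the paper's proof of the more general L-R version immediately following it, specialized to $Q_i=\mathrm{id}$). The identification of the hexagon equation as precisely the missing ingredient for the ``mixed'' multiplicativity axiom (\ref{tw4}) for $T_1$ (and symmetrically for $T_2$) is correct and is indeed the heart of the matter.

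Two small corrections. First, in your formula for $T_2$ you wrote $a_{R_1R_3}$, but since $T_2=(R_1\ot id_C)\circ(id_B\ot R_3)$ applies $R_3$ first and then $R_1$, the correct expression is $a_{R_3R_1}$. Second, and more substantively, your claim that the hexagon is needed to reconcile the two iterated multiplications is inaccurate: once $T_1$ and $T_2$ are in hand, a direct computation shows that both products on $A\ot B\ot C$ equal
\[
a\,(a'_{R_3})_{R_1}\ot b_{R_1}\,b'_{R_2}\ot (c_{R_3})_{R_2}\,c',
\]
literally the same expression, with no further identity required. (In the paper's generalized setting the equality of the two products uses only the commutation relations (\ref{comb3})--(\ref{comb4}), which become trivial when the $Q_i$ are identities.) So the hexagon is used exactly once on each side---to make $T_1$ and $T_2$ twisting maps---and not again for the coincidence of the algebras.
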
 

Our aim is now is to generalize this result for L-R-twisted tensor 
products of algebras. We begin with 
what will be our guiding example, namely a situation when L-R-smash 
products may be iterated.  

We recall the following construction introduced in \cite{panvan}.  
Let $H$ be a bialgebra and denote by ${\cal LR}(H)$ the category 
whose objects  
are vector spaces $M$ endowed with $H$-bimodule and 
$H$-bicomodule structures (denoted by $h\otimes m\mapsto h\cdot m$, 
$m\otimes h\mapsto m\cdot h$, $m\mapsto m^{(-1)}\otimes m^{(0)}$, 
$m\mapsto m^{<0>}\otimes m^{<1>}$, for all $h\in H$, $m\in M$), such that 
$M$ is a left-left Yetter-Drinfeld module, a left-right Long module, 
a right-right Yetter-Drinfeld module and a right-left Long module, i.e.  
\begin{eqnarray}
&&(h_1\cdot m)^{(-1)}h_2\otimes (h_1\cdot m)^{(0)}=
h_1m^{(-1)}\otimes h_2\cdot m^{(0)}, \label{ydl1} \\
&&(h\cdot m)^{<0>}\otimes (h\cdot m)^{<1>}=h\cdot m^{<0>}\otimes 
m^{<1>}, \label{ydl2} \\
&&(m\cdot h_2)^{<0>}\otimes h_1(m\cdot h_2)^{<1>}=
m^{<0>}\cdot h_1\otimes m^{<1>}h_2, \label{ydl3} \\
&&(m\cdot h)^{(-1)}\otimes (m\cdot h)^{(0)}=m^{(-1)}\otimes 
m^{(0)}\cdot h, \label{ydl4}
\end{eqnarray}
for all $h\in H$, $m\in M$; the morphisms in ${\cal LR}(H)$ are the 
$H$-bilinear $H$-bicolinear maps. The objects of  ${\cal LR}(H)$ are called 
Yetter-Drinfeld-Long bimodules. This category ${\cal LR}(H)$ is a 
strict monoidal category,  
with unit $k$ endowed with usual $H$-bimodule and $H$-bicomodule structures, 
and tensor product given as follows: if $M, N\in {\cal LR}(H)$ then 
$M\otimes N\in {\cal LR}(H)$ with structures (for all $m\in M$, $n\in N$, 
$h\in H$): 
\begin{eqnarray*}
&&h\cdot (m\otimes n)=h_1\cdot m\otimes h_2\cdot n, \\
&&(m\otimes n)\cdot h=m\cdot h_1\otimes n\cdot h_2, \\
&&(m\otimes n)^{(-1)}\otimes (m\otimes n)^{(0)}=m^{(-1)}n^{(-1)}\otimes 
(m^{(0)}\otimes n^{(0)}), \\
&&(m\otimes n)^{<0>}\otimes (m\otimes n)^{<1>}=(m^{<0>}\otimes n^{<0>})
\otimes m^{<1>}n^{<1>}.
\end{eqnarray*}

Let now $A$ be an algebra in the monoidal category ${\cal LR}(H)$. 
By looking at the 
definition of ${\cal LR}(H)$ as a monoidal category, it is easy to see that, 
in particular, 
$A$ is an $H$-bimodule algebra and an $H$-bicomodule algebra. 
Thus, if ${\cal A}$ is 
an $H$-bimodule algebra, we can consider the associative algebras 
${\cal A}\nat A$ and 
$A\nat H$. 
\begin{proposition}\label{iterlrsmash}
Let $H$ be a bialgebra, $A$ an algebra in ${\cal LR}(H)$ and ${\cal A}$ an 
$H$-bimodule algebra. Then:\\
(i) ${\cal A}\nat A$ is an $H$-bimodule algebra, with $H$-module structures 
given by 
$h\cdot (\varphi \nat a)=h_1\cdot \varphi \nat h_2\cdot a$ and 
$(\varphi \nat a)\cdot h=
\varphi \cdot h_2\nat a\cdot h_1$, for all $\varphi \in {\cal A}$, 
$a\in A$, $h\in H$. \\
(ii) $A\nat H$ is an $H$-bicomodule algebra, with $H$-comodule 
structures given by 
\begin{eqnarray*}
&&\lambda : A\nat H\rightarrow H\ot (A\nat H), \;\;\;\lambda (a\nat h)=
a^{(-1)}h_1\ot 
(a^{(0)}\nat h_2):=(a\nat h)_{[-1]}\ot (a\nat h)_{[0]}, \\
&&\rho :A\nat H\rightarrow (A\nat H)\ot H, \;\;\;\rho (a\nat h)=
(a^{<0>}\nat h_1)\ot h_2a^{<1>}
:=(a\nat h)_{<0>}\ot (a\nat h)_{<1>}.
\end{eqnarray*}
 (iii) The algebras $({\cal A}\nat A)\nat H$ and ${\cal A}\nat (A\nat H)$ coincide.  
\end{proposition}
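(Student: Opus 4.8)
The plan is to handle the three parts in the stated order, since (iii) needs the structures produced in (i) and (ii), and to pin down at each stage exactly which of the four compatibility conditions (\ref{ydl1})--(\ref{ydl4}) of ${\cal LR}(H)$ does the work. The recurring principle is that the actions and coactions appearing on each smash product are the diagonal ones coming from ${\cal LR}(H)$, except that every right-hand structure carries flipped Sweedler legs; this asymmetry is forced by the left--right asymmetry of the smash multiplication. The two Yetter--Drinfeld conditions (\ref{ydl1}), (\ref{ydl3}) will govern the ``entangled'' same-handed interactions (left action with left coaction, right action with right coaction), while the two Long conditions (\ref{ydl2}), (\ref{ydl4}) will govern the ``trivial'' opposite-handed slides.

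For (i), I would first dispatch the easy axioms: that the two formulas define commuting left and right $H$-actions and that $h\cdot(1\nat 1)=\va(h)(1\nat 1)=(1\nat 1)\cdot h$, all of which follow at once from ${\cal A}$ and $A$ being $H$-bimodules together with counitality. The substantive point is that the smash multiplication is a morphism of $H$-bimodules. Expanding $h\cdot[(\varphi\nat a)(\varphi '\nat a')]$, I would push $h$ through the product in ${\cal A}$ and the product in $A$ (legitimate because both are $H$-module algebras and ${\cal A}$ is a bimodule, so its left and right actions commute) and compare with $\sum (h_1\cdot(\varphi\nat a))(h_2\cdot(\varphi '\nat a'))$. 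The two expressions coincide because (\ref{ydl2}) lets the left action slide unchanged through the right coaction $a'^{<1>}$, while in the other factor the subterm $(h_1\cdot a)^{(-1)}\cdot(h_2\cdot\varphi ')$ may be rewritten, using that ${\cal A}$ is a left module, as $((h_1\cdot a)^{(-1)}h_2)\cdot\varphi '$, whereupon precisely (\ref{ydl1}) replaces $(h_1\cdot a)^{(-1)}h_2\ot(h_1\cdot a)^{(0)}$ by the disentangled $h_1 a^{(-1)}\ot h_2\cdot a^{(0)}$. The right-action compatibility is the mirror image and uses (\ref{ydl4}) for the trivial slide and (\ref{ydl3}) for the entangled identity.

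For (ii), here $A$ is the bimodule algebra and $H$ the bicomodule algebra with $\Delta$ as both coactions, so the multiplication is $(a\nat h)(a'\nat h')=(a\cdot h'_2)(h_1\cdot a')\nat h_2h'_1$. I would first check that $\lambda$ and $\rho$ are coassociative, counital and mutually commuting; these are purely coalgebraic and reduce to coassociativity of $\Delta$, to coassociativity and counitality of the two $A$-coactions, and to the fact that the left and right $A$-coactions commute, all available because $A\in{\cal LR}(H)$. The main step is that $\lambda$ and $\rho$ are algebra maps. Expanding $\lambda((a\nat h)(a'\nat h'))$ and comparing with $\lambda(a\nat h)\lambda(a'\nat h')$ computed in $H\ot(A\nat H)$, the match uses that the left $A$-coaction is multiplicative (since $A$ is an algebra in the category), the Long condition (\ref{ydl4}) to slide the left coaction past the right action $a\cdot h'_2$, and the Yetter--Drinfeld relation (\ref{ydl1}) to reorganise $a^{(-1)}$ against the legs of $\Delta(h)$ produced by $h_1\cdot a'$; the verification for $\rho$ is symmetric and rests on (\ref{ydl2}) and (\ref{ydl3}).

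For (iii), both algebras are built on ${\cal A}\ot A\ot H$, so I would compute the two products on a general pair of elements $\varphi\nat a\nat h$ and $\varphi '\nat a'\nat h'$ (identifying the two bracketings as the same vector) and check that they agree term by term. On the left side I expand the smash rule for $({\cal A}\nat A)\nat H$ using the $H$-bimodule structure of ${\cal A}\nat A$ from (i); on the right side I expand the smash rule for ${\cal A}\nat(A\nat H)$ using the $H$-bicomodule structure of $A\nat H$ from (ii), followed by the internal multiplication of $A\nat H$. After unravelling, each side becomes an expression in which $\varphi$ carries a right $H$-action, $\varphi '$ carries a left $H$-action, and the $A$- and $H$-tensorands are products of suitably translated components; the remaining discrepancies between the two bracketings are then removed by (\ref{ydl1})--(\ref{ydl4}), which relate the nested actions and coactions. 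I expect this last computation to be the main obstacle: no single step is deep, but it involves three tensor factors and several iterated coproducts, so the Sweedler bookkeeping is heavy. The way to keep it under control is to apply (\ref{ydl2}) and (\ref{ydl4}) first, collapsing all the trivial slides, and to reserve the genuinely entangled relations (\ref{ydl1}) and (\ref{ydl3}) for the final matching of the two sides.
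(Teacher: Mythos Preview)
Your proposal is correct and follows essentially the same route as the paper: parts (i) and (ii) are handled exactly as you describe, with the same pairing of (\ref{ydl1})--(\ref{ydl4}) to the left/right module and comodule checks. One small simplification in (iii): once you apply the Long conditions (\ref{ydl2}) and (\ref{ydl4}) on the $({\cal A}\nat A)\nat H$ side, the two multiplications already coincide on the nose, so the entangled relations (\ref{ydl1}), (\ref{ydl3}) are not needed there (they were already absorbed into the structures built in (i) and (ii)).
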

\begin{proof}
(i) Obviously, ${\cal A}\nat A$ is an $H$-bimodule. 
Let $h\in H$, $\varphi , \varphi '\in {\cal A}$ and 
$a, a'\in A$. We check the left and right $H$-module algebra conditions:
\begin{eqnarray*}
(h_1\cdot (\varphi \nat a))(h_2\cdot (\varphi '\nat a'))&=&
(h_1\cdot \varphi \nat h_2\cdot a)(h_3\cdot \varphi '\nat h_4\cdot a')\\
&=&((h_1\cdot \varphi )\cdot (h_4\cdot a')^{<1>})
((h_2\cdot a)^{(-1)}\cdot (h_3\cdot \varphi '))\\
&&\nat (h_2\cdot a)^{(0)}(h_4\cdot a')^{<0>}\\
&\overset{(\ref{ydl2})}{=}&(h_1\cdot \varphi \cdot a'^{<1>})
((h_2\cdot a)^{(-1)}h_3\cdot 
\varphi ')\nat (h_2\cdot a)^{(0)}(h_4\cdot a'^{<0>})\\
&\overset{(\ref{ydl1})}{=}&(h_1\cdot \varphi \cdot a'^{<1>})
(h_2a^{(-1)}\cdot 
\varphi ')\nat (h_3\cdot a^{(0)})(h_4\cdot a'^{<0>})\\
&=&h\cdot ((\varphi \cdot a'^{<1>})(a^{(-1)}\cdot \varphi ')
\nat a^{(0)}a'^{<0>})\\
&=&h\cdot ((\varphi \nat a)(\varphi '\nat a')), \;\;\;q.e.d.
\end{eqnarray*} 
\begin{eqnarray*}
((\varphi \nat a)\cdot h_1)((\varphi '\nat a')\cdot h_2)&=&
(\varphi \cdot h_2\nat a\cdot h_1)
(\varphi '\cdot h_4\nat a'\cdot h_3)\\
&=&((\varphi \cdot h_2)\cdot (a'\cdot h_3)^{<1>})
((a\cdot h_1)^{(-1)}\cdot 
(\varphi '\cdot h_4))\\
&&\nat (a\cdot h_1)^{(0)}(a'\cdot h_3)^{<0>}\\
&\overset{(\ref{ydl4})}{=}&(\varphi \cdot h_2(a'\cdot h_3)^{<1>})
(a^{(-1)}\cdot 
\varphi '\cdot h_4)\nat (a^{(0)}\cdot h_1)(a'\cdot h_3)^{<0>}\\
&\overset{(\ref{ydl3})}{=}&(\varphi \cdot a'^{<1>}h_3)(a^{(-1)}\cdot 
\varphi '\cdot h_4)\nat (a^{(0)}\cdot h_1)(a'^{<0>}\cdot h_2)\\
&=&((\varphi \cdot a'^{<1>})(a^{(-1)}\cdot \varphi '))\cdot h_2
\nat (a^{(0)}a'^{<0>})\cdot h_1\\
&=&((\varphi \cdot a'^{<1>})(a^{(-1)}\cdot \varphi ')\nat a^{(0)}
a'^{<0>})\cdot h\\
&=&((\varphi \nat a)(\varphi '\nat a'))\cdot h, \;\;\;q.e.d.
\end{eqnarray*}
(ii) It is very easy to see that $A\nat H$ is an $H$-bicomodule, 
so we check the left and 
right $H$-comodule algebra conditions:
\begin{eqnarray*}
\lambda ((a\nat h)(a'\nat h'))&=&\lambda ((a\cdot h'_2)(h_1\cdot a')
\nat h_2h'_1)\\
&=&(a\cdot h'_3)^{(-1)}(h_1\cdot a')^{(-1)}h_2h'_1\ot 
((a\cdot h'_3)^{(0)}(h_1\cdot a')^{(0)}\nat h_3h'_2)\\
&\overset{(\ref{ydl4})}{=}&a^{(-1)}(h_1\cdot a')^{(-1)}h_2h'_1\ot 
((a^{(0)}\cdot h'_3)(h_1\cdot a')^{(0)}\nat h_3h'_2)\\
&\overset{(\ref{ydl1})}{=}&a^{(-1)}h_1a'^{(-1)}h'_1\ot 
((a^{(0)}\cdot h'_3)(h_2\cdot a'^{(0)})\nat h_3h'_2)\\
&=&(a^{(-1)}h_1\ot (a^{(0)}\nat h_2))(a'^{(-1)}h'_1\ot (a'^{(0)}\nat h'_2))\\
&=&\lambda (a\nat h)\lambda (a'\nat h'), \;\;\;q.e.d.
\end{eqnarray*}
\begin{eqnarray*}
\rho ((a\nat h)(a'\nat h'))&=&\rho  ((a\cdot h'_2)(h_1\cdot a')\nat h_2h'_1)\\
&=&((a\cdot h'_3)^{<0>}(h_1\cdot a')^{<0>}\nat h_2h'_1)\ot 
h_3h'_2(a\cdot h'_3)^{<1>}(h_1\cdot a')^{<1>}\\
&\overset{(\ref{ydl2})}{=}&((a\cdot h'_3)^{<0>}(h_1\cdot a'^{<0>})
\nat h_2h'_1)\ot 
h_3h'_2(a\cdot h'_3)^{<1>}a'^{<1>}\\
&\overset{(\ref{ydl3})}{=}&((a^{<0>}\cdot h'_2)(h_1\cdot a'^{<0>})
\nat h_2h'_1)\ot 
h_3a^{<1>}h'_3a'^{<1>}\\
&=&(a^{<0>}\nat h_1)(a'^{<0>}\nat h'_1)\ot h_2a^{<1>}h'_2a'^{<1>}\\
&=&\rho (a\nat h)\rho (a'\nat h'), \;\;\;q.e.d.
\end{eqnarray*}
(iii) We write down the multiplication of $({\cal A}\nat A)\nat H$:
\begin{eqnarray*}
((\varphi \nat a)\nat h)((\varphi ' \nat a')\nat h')&=&((\varphi \nat a)\cdot h'_2)
(h_1\cdot (\varphi '\nat a'))\nat h_2h'_1\\
&=&(\varphi \cdot h'_3\nat a\cdot h'_2)(h_1\cdot \varphi '\nat h_2\cdot a')
\nat h_3h'_1\\
&=&(\varphi \cdot h'_3(h_2\cdot a')^{<1>})((a\cdot h'_2)^{(-1)}h_1\cdot 
\varphi ')\\
&&\nat 
(a\cdot h'_2)^{(0)}(h_2\cdot a')^{<0>}\nat h_3h'_1\\
&\overset{(\ref{ydl2}), (\ref{ydl4})}{=}&(\varphi \cdot h'_3a'^{<1>})
(a^{(-1)}h_1\cdot \varphi ')
\nat (a^{(0)}\cdot h'_2)(h_2\cdot a'^{<0>})\nat h_3h'_1. 
\end{eqnarray*}
We write down the multiplication of ${\cal A}\nat (A\nat H)$:
\begin{eqnarray*}
(\varphi \nat (a\nat h))(\varphi ' \nat (a'\nat h'))&=&
(\varphi \cdot (a'\nat h')_{<1>})((a\nat h)_{[-1]}\cdot \varphi ')
\nat (a\nat h)_{[0]}
(a'\nat h')_{<0>}\\
&=&(\varphi \cdot h'_2a'^{<1>})(a^{(-1)}h_1\cdot \varphi ')
\nat (a^{(0)}\nat h_2)(a'^{<0>}\nat h'_1)\\
&=&(\varphi \cdot h'_3a'^{<1>})(a^{(-1)}h_1\cdot \varphi ')
\nat (a^{(0)}\cdot h'_2)
(h_2\cdot a'^{<0>})\nat h_3h'_1,
\end{eqnarray*}
and we see that the two multiplications coincide.
\end{proof}

We are able now to find a common generalization of Theorem \ref{itertw} and 
Proposition \ref{iterlrsmash}:
\begin{theorem}
Let $A\;_{Q_1}\ot _{R_1}B$,  $B\;_{Q_2}\ot _{R_2}C$,  
$A\;_{Q_3}\ot _{R_3}C$ be three 
L-R-twisted tensor products of algebras, such that the following 
conditions are satisfied, 
for all $a\in A$, $b\in B$, $c\in C$:
\begin{eqnarray}
&&(a_{R_1})_{R_3}\ot (b_{R_1})_{R_2}\ot (c_{R_3})_{R_2}=
(a_{R_3})_{R_1}\ot (b_{R_2})_{R_1}\ot (c_{R_2})_{R_3}, \label{YB} \\ 
&&(a_{Q_1})_{Q_3}\ot (b_{Q_1})_{Q_2}\ot (c_{Q_3})_{Q_2}=
(a_{Q_3})_{Q_1}\ot (b_{Q_2})_{Q_1}\ot (c_{Q_2})_{Q_3}, \label{YBQuri} \\
&&a_{R_1}\ot (b_{R_1})_{Q_2}\ot c_{Q_2}=
a_{R_1}\ot (b_{Q_2})_{R_1}\ot c_{Q_2}, \label{comb3} \\
&&a_{Q_1}\ot (b_{R_2})_{Q_1}\ot c_{R_2}=
a_{Q_1}\ot (b_{Q_1})_{R_2}\ot c_{R_2}, \label{comb4} \\
&&(a_{Q_1})_{R_3}\ot b_{Q_1}\ot c_{R_3}=
(a_{R_3})_{Q_1}\ot b_{Q_1}\ot c_{R_3}, \label{comb5} \\
&&(a_{R_1})_{Q_3}\ot b_{R_1}\ot c_{Q_3}=
(a_{Q_3})_{R_1}\ot b_{R_1}\ot c_{Q_3}, \label{comb6} \\
&&a_{R_3}\ot b_{Q_2}\ot (c_{Q_2})_{R_3}=
a_{R_3}\ot b_{Q_2}\ot (c_{R_3})_{Q_2}, \label{comb7} \\
&&a_{Q_3}\ot b_{R_2}\ot (c_{Q_3})_{R_2}=
a_{Q_3}\ot b_{R_2}\ot (c_{R_2})_{Q_3}. \label{comb8} 
\end{eqnarray}
Define the maps 
\begin{eqnarray*}
&&T_1:C\ot (A\ot B)\rightarrow (A\ot B)\ot C, \;\;\;T_1(c\ot (a\ot b))=
(a_{R_3}\ot b_{R_2})\ot (c_{R_3})_{R_2}, \\
&&V_1:(A\ot B)\ot C\rightarrow (A\ot B)\ot C, \;\;\;V_1((a\ot b)\ot c)=
(a_{Q_3}\ot b_{Q_2})\ot (c_{Q_3})_{Q_2}, \\
&&T_2:(B\ot C)\ot A\rightarrow A\ot (B\ot C), \;\;\;T_2((b\ot c)\ot a)=
(a_{R_3})_{R_1}\ot (b_{R_1}\ot c_{R_3}), \\
&&V_2:A\ot (B\ot C)\rightarrow A\ot (B\ot C), \;\;\;V_2(a\ot (b\ot c))=
(a_{Q_3})_{Q_1}\ot (b_{Q_1}\ot c_{Q_3}). 
\end{eqnarray*}
Then $(A\;_{Q_1}\ot _{R_1}B)\;_{V_1}\ot _{T_1}C$ and 
$A\;_{V_2}\ot _{T_2}(B\;_{Q_2}\ot _{R_2}C)$ are L-R-twisted 
tensor products of algebras
 and moreover they coincide as algebras. 
\end{theorem}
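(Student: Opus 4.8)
The plan is to prove the statement in three stages. First I would check that the pair $(T_1,V_1)$ satisfies the eight axioms of Proposition \ref{LRtwpr} with $A\;_{Q_1}\ot_{R_1}B$ as the left algebra, $C$ as the right algebra, $T_1$ in the role of $R$ and $V_1$ in the role of $Q$; this makes $(A\;_{Q_1}\ot_{R_1}B)\;_{V_1}\ot_{T_1}C$ a genuine L-R-twisted tensor product. Second I would verify the analogous statement for $(T_2,V_2)$, now with $A$ as left algebra and $B\;_{Q_2}\ot_{R_2}C$ as right algebra. Third I would expand both multiplications on $A\ot B\ot C$ and show they are equal. Throughout I would use the subscript calculus already employed in the paper, writing $a_{R_3}$, $(a_{R_3})_{R_1}$, $(b_{Q_2})_{R_1}$, and so on, remembering that each subscript silently records which partner was fed to the corresponding map.

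For the first bracketing, the unit axioms (\ref{tw0}), (\ref{tw0'}) are immediate from the unit axioms of $R_2,R_3,Q_2,Q_3$. The mixing axioms (\ref{comb1}), (\ref{comb2}) ask that $T_1$ commute with $V_1$ on the $D$-variable and on the $C$-variable: on $D=A\ot B$ the families $\{R_2,R_3\}$ and $\{Q_2,Q_3\}$ act on separate legs, so only the like-leg collisions $R_3$ against $Q_3$ and $R_2$ against $Q_2$ occur, and these are (\ref{comb1}), (\ref{comb2}) for the constituent products $A\;_{Q_3}\ot_{R_3}C$ and $B\;_{Q_2}\ot_{R_2}C$; on the $C$-variable all four maps meet, so one additionally needs the cross conditions (\ref{comb7}), (\ref{comb8}).

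The real work is the multiplicativity axioms (\ref{tw4}), (\ref{tw5}) and their primed versions. In the first bracketing (\ref{tw5}), (\ref{tw5'}) are painless, since multiplying inside $C$ introduces no new map, whereas (\ref{tw4}), (\ref{tw4'}) are the crux: expanding $(a\ot b)(a'\ot b')=a_{Q_1}a'_{R_1}\ot b_{R_1}b'_{Q_1}$ brings in $R_1,Q_1$, and one must drag $T_1$ (built from $R_2,R_3$) and $V_1$ (built from $Q_2,Q_3$) past them. A collision of two $R$-maps is resolved by (\ref{YB}), of two $Q$-maps by (\ref{YBQuri}), and of an $R$-map with a $Q$-map by one of (\ref{comb3})--(\ref{comb6}), after which one reassembles using (\ref{tw4}), (\ref{tw4'}) for $R_2,R_3,Q_2,Q_3$. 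The second bracketing goes by the same method with the roles of (\ref{tw4}) and (\ref{tw5}) interchanged, the inner product now living in $B\;_{Q_2}\ot_{R_2}C$, so that between the two bracketings all of (\ref{YB})--(\ref{comb8}) are used. I expect this family of multiplicativity checks to be the main obstacle: with up to six subscripts simultaneously in flight the bookkeeping is delicate, and the real content is that the eight hypotheses are tuned so that at each commutation exactly one of them applies and no unlicensed swap of two maps is ever demanded.

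For the third stage I would expand both products down to $A\ot B\ot C$. The left-hand iteration gives
\[
(a_{Q_3})_{Q_1}(a'_{R_3})_{R_1}\ot(b_{Q_2})_{R_1}(b'_{R_2})_{Q_1}\ot(c_{R_3})_{R_2}(c'_{Q_3})_{Q_2},
\]
while the right-hand one gives
\[
(a_{Q_3})_{Q_1}(a'_{R_3})_{R_1}\ot(b_{R_1})_{Q_2}(b'_{Q_1})_{R_2}\ot(c_{R_3})_{R_2}(c'_{Q_3})_{Q_2}.
\]
The $A$- and $C$-components agree symbolically, and the two $B$-components are matched by applying (\ref{comb3}) with $a\to a'_{R_3}$, $c\to c'_{Q_3}$ to identify $(b_{Q_2})_{R_1}=(b_{R_1})_{Q_2}$, and (\ref{comb4}) with $a\to a_{Q_3}$, $c\to c_{R_3}$ to identify $(b'_{R_2})_{Q_1}=(b'_{Q_1})_{R_2}$. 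These are exactly the instances of (\ref{comb3}), (\ref{comb4}) that leave the outer subscripts untouched, so the two products are equal and the two algebras coincide.
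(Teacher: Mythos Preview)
Your proposal is correct and follows essentially the same approach as the paper: the paper likewise verifies the axioms of Proposition \ref{LRtwpr} for $(T_1,V_1)$ (leaving $(T_2,V_2)$ to the reader), writes out only the non-trivial checks (\ref{tw4}), (\ref{tw4'}), (\ref{comb2}) as explicit subscript chases using precisely the hypotheses you identify, and then compares the two multiplications on $A\ot B\ot C$, reconciling the $B$-components via (\ref{comb3}) and (\ref{comb4}). Your structural account of which collisions invoke which of (\ref{YB})--(\ref{comb8}) matches the paper's computations; the only cosmetic difference is that the paper applies (\ref{comb4}) to one side and (\ref{comb3}) to the other to reach a common middle form, whereas you apply both swaps to pass directly from left to right.
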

\begin{proof}
We only give the proof for $(A\;_{Q_1}\ot _{R_1}B)\;_{V_1}\ot _{T_1}C$, 
the one for $A\;_{V_2}\ot _{T_2}(B\;_{Q_2}\ot _{R_2}C)$ is similar 
and left to the reader. We need to prove the relations 
(\ref{tw0})--(\ref{comb2}) for the maps $T_1$ and $V_1$. We only 
prove (\ref{tw4}), (\ref{tw4'}) and (\ref{comb2}), the other relations 
are very easy to prove and are left to the reader. \\
\underline{Proof of (\ref{tw4}):}
\begin{eqnarray*}
((a\ot b)(a'\ot b'))_{T_1}\ot c_{T_1}&=&(a_{Q_1}a'_{R_1}\ot 
b_{R_1}b'_{Q_1})_{T_1}\ot c_{T_1}\\
&=&(a_{Q_1}a'_{R_1})_{R_3}\ot 
(b_{R_1}b'_{Q_1})_{R_2}\ot (c_{R_3})_{R_2}\\
&\overset{(\ref{tw4})}{=}&(a_{Q_1})_{R_3}(a'_{R_1})_{r_3}\ot 
(b_{R_1}b'_{Q_1})_{R_2}\ot ((c_{R_3})_{r_3})_{R_2}\\
&\overset{(\ref{tw4})}{=}&(a_{Q_1})_{R_3}(a'_{R_1})_{r_3}\ot 
(b_{R_1})_{R_2}(b'_{Q_1})_{r_2}\ot (((c_{R_3})_{r_3})_{R_2})_{r_2}\\
&\overset{(\ref{comb4})}{=}&(a_{Q_1})_{R_3}(a'_{R_1})_{r_3}\ot 
(b_{R_1})_{R_2}(b'_{r_2})_{Q_1}\ot (((c_{R_3})_{r_3})_{R_2})_{r_2}\\
&\overset{(\ref{comb5})}{=}&(a_{R_3})_{Q_1}(a'_{R_1})_{r_3}\ot 
(b_{R_1})_{R_2}(b'_{r_2})_{Q_1}\ot (((c_{R_3})_{r_3})_{R_2})_{r_2}\\
&\overset{(\ref{YB})}{=}&(a_{R_3})_{Q_1}(a'_{r_3})_{R_1}\ot 
(b_{R_2})_{R_1}(b'_{r_2})_{Q_1}\ot (((c_{R_3})_{R_2})_{r_3})_{r_2}\\
&=&(a_{R_3}\ot b_{R_2})(a'_{r_3}\ot b'_{r_2})
\ot (((c_{R_3})_{R_2})_{r_3})_{r_2}\\
&=&(a\ot b)_{T_1}(a'\ot b')_{t_1}\ot (c_{T_1})_{t_1}, \;\;\;q.e.d.
\end{eqnarray*} 
\underline{Proof of (\ref{tw4'}):}
\begin{eqnarray*}
((a\ot b)(a'\ot b'))_{V_1}\ot c_{V_1}&=&(a_{Q_1}a'_{R_1}\ot 
b_{R_1}b'_{Q_1})_{V_1}\ot c_{V_1}\\
&=&(a_{Q_1}a'_{R_1})_{Q_3}\ot 
(b_{R_1}b'_{Q_1})_{Q_2}\ot (c_{Q_3})_{Q_2}\\
&\overset{(\ref{tw4'})}{=}&(a_{Q_1})_{q_3}(a'_{R_1})_{Q_3}\ot 
(b_{R_1}b'_{Q_1})_{Q_2}\ot ((c_{Q_3})_{q_3})_{Q_2}\\
&\overset{(\ref{tw4'})}{=}&(a_{Q_1})_{q_3}(a'_{R_1})_{Q_3}\ot 
(b_{R_1})_{q_2}(b'_{Q_1})_{Q_2}\ot (((c_{Q_3})_{q_3})_{Q_2})_{q_2}\\
&\overset{(\ref{YBQuri})}{=}&(a_{q_3})_{Q_1}(a'_{R_1})_{Q_3}\ot 
(b_{R_1})_{q_2}(b'_{Q_2})_{Q_1}\ot (((c_{Q_3})_{Q_2})_{q_3})_{q_2}\\
&\overset{(\ref{comb3})}{=}&(a_{q_3})_{Q_1}(a'_{R_1})_{Q_3}\ot 
(b_{q_2})_{R_1}(b'_{Q_2})_{Q_1}\ot (((c_{Q_3})_{Q_2})_{q_3})_{q_2}\\
&\overset{(\ref{comb6})}{=}&(a_{q_3})_{Q_1}(a'_{Q_3})_{R_1}\ot 
(b_{q_2})_{R_1}(b'_{Q_2})_{Q_1}\ot (((c_{Q_3})_{Q_2})_{q_3})_{q_2}\\
&=&(a_{q_3}\ot b_{q_2})(a'_{Q_3}\ot b'_{Q_2})\ot  
(((c_{Q_3})_{Q_2})_{q_3})_{q_2}\\
&=&(a\ot b)_{v_1}(a'\ot b')_{V_1}\ot (c_{V_1})_{v_1}, \;\;\;q.e.d.
\end{eqnarray*}
\underline{Proof of (\ref{comb2}):}
\begin{eqnarray*}
(a\ot b)_{T_1}\ot (c_{V_1})_{T_1}\ot (a'\ot b')_{V_1}&=&
(a_{R_3}\ot b_{R_2})\ot (((c_{Q_3})_{Q_2})_{R_3})_{R_2}\ot 
(a'_{Q_3}\ot b'_{Q_2})\\
&\overset{(\ref{comb7})}{=}&(a_{R_3}\ot b_{R_2})\ot 
(((c_{Q_3})_{R_3})_{Q_2})_{R_2}\ot 
(a'_{Q_3}\ot b'_{Q_2})\\
&\overset{(\ref{comb2})}{=}&(a_{R_3}\ot b_{R_2})\ot 
(((c_{R_3})_{Q_3})_{R_2})_{Q_2}\ot 
(a'_{Q_3}\ot b'_{Q_2})\\
&\overset{(\ref{comb8})}{=}&(a_{R_3}\ot b_{R_2})\ot 
(((c_{R_3})_{R_2})_{Q_3})_{Q_2}\ot 
(a'_{Q_3}\ot b'_{Q_2})\\
&=&(a\ot b)_{T_1}\ot (c_{T_1})_{V_1}\ot (a'\ot b')_{V_1}, \;\;\;q.e.d.
\end{eqnarray*}
We prove now that  $(A\;_{Q_1}\ot _{R_1}B)\;_{V_1}\ot _{T_1}C\equiv  
A\;_{V_2}\ot _{T_2}(B\;_{Q_2}\ot _{R_2}C)$. We write down 
the multiplication of  
 $(A\;_{Q_1}\ot _{R_1}B)\;_{V_1}\ot _{T_1}C$:
\begin{eqnarray*}
((a\ot b)\ot c)((a'\ot b')\ot c')&=&(a\ot b)_{V_1}(a'\ot b')_{T_1}\ot 
c_{T_1}c'_{V_1}\\
&=&(a_{Q_3}\ot b_{Q_2})(a'_{R_3}\ot b'_{R_2})\ot 
(c_{R_3})_{R_2}(c'_{Q_3})_{Q_2}\\
&=&(a_{Q_3})_{Q_1}(a'_{R_3})_{R_1}\ot 
(b_{Q_2})_{R_1}(b'_{R_2})_{Q_1}\ot 
(c_{R_3})_{R_2}(c'_{Q_3})_{Q_2}\\
&\overset{(\ref{comb4})}{=}&(a_{Q_3})_{Q_1}(a'_{R_3})_{R_1}\ot 
(b_{Q_2})_{R_1}(b'_{Q_1})_{R_2}\ot 
(c_{R_3})_{R_2}(c'_{Q_3})_{Q_2}.
\end{eqnarray*}
 We write down the multiplication of  $A\;_{V_2}\ot 
_{T_2}(B\;_{Q_2}\ot _{R_2}C)$:
\begin{eqnarray*}
(a\ot (b\ot c))(a'\ot (b'\ot c'))&=&a_{V_2}a'_{T_2}\ot (b\ot c)_{T_2}
(b'\ot c')_{V_2}\\
&=&(a_{Q_3})_{Q_1}(a'_{R_3})_{R_1}\ot (b_{R_1}\ot c_{R_3})
(b'_{Q_1}\ot c'_{Q_3})\\
&=&(a_{Q_3})_{Q_1}(a'_{R_3})_{R_1}\ot 
(b_{R_1})_{Q_2}(b'_{Q_1})_{R_2}
\ot (c_{R_3})_{R_2}(c'_{Q_3})_{Q_2}\\
&\overset{(\ref{comb3})}{=}&(a_{Q_3})_{Q_1}(a'_{R_3})_{R_1}\ot 
(b_{Q_2})_{R_1}(b'_{Q_1})_{R_2}
\ot (c_{R_3})_{R_2}(c'_{Q_3})_{Q_2}, 
\end{eqnarray*}
and we can see that the two formulae are identical.
\end{proof}
\section{Invariance under twisting}
\setcounter{equation}{0}
${\;\;\;}$
Let $H$ be a bialgebra and $F\in H\ot H$ a 2-cocycle, that is $F$ is
invertible and satisfies
\begin{eqnarray*}
&&(\varepsilon \ot id)(F)=(id \ot \varepsilon )(F)=1, \\
&&(1\ot F)(id\ot \Delta )(F)=(F\ot 1)(\Delta \ot id)(F).
\end{eqnarray*}
We denote $F=F^1\ot F^2$ and $F^{-1}=G^1\ot G^2$. We denote by $H_F$
the Drinfeld twist of $H$, which is a bialgebra having the same
algebra structure as $H$ and comultiplication given by $\Delta
_F(h)= F\Delta (h)F^{-1}$, for all $h\in H$.

If $A$ is a left $H$-module algebra (with $H$-action denoted by
$h\ot a\mapsto h\cdot a$), the invariance under twisting of the
smash product $A\# H$ is the following result (see \cite{Majid97a},
\cite{Bulacu00a}). Define a new multiplication on $A$, by
$a*a'=(G^1\cdot a)(G^2\cdot a')$, for all $a, a'\in A$, and denote
by $A_{F^{-1}}$ the new structure; then $A_{F^{-1}}$ is a left
$H_F$-module algebra (with the same action as for $A$) and we have
an algebra isomorphism 
$A_{F^{-1}}\# H_F\simeq A\# H, \;\;a\# h\mapsto G^1\cdot a\# G^2h$.

This result was regarded in \cite{jlpvo} as a particular case of a very 
general result (Theorem 4.4) for twisted tensor products of algebras, 
that was called ''invariance under twisting'' for twisted tensor products 
of algebras.

Let again $H$ be a bialgebra, $F\in H\ot H$ a 2-cocycle and ${\cal A}$ 
an $H$-bimodule algebra. Define a new multiplication on ${\cal A}$, by 
$\varphi \bullet \varphi '=(G^1\cdot \varphi \cdot F^1)(G^2\cdot 
\varphi '\cdot F^2)$, where $F=F^1\otimes F^2$ and $F^{-1}=G^1\otimes G^2$, 
and denote by $_F{\cal A}_{F^{-1}}$ the new structure. Then one can 
easily see that $_F{\cal A}_{F^{-1}}$ is an $H_F$-bimodule algebra, 
and moreover we have the following invariance under twisting for 
L-R-smash products: 
\begin{proposition}\label{invutwlrsmash}
We have an algebra isomorphism 
\begin{eqnarray*}
&&_F{\cal A}_{F^{-1}}\nat H_F\simeq 
{\cal A}\nat H, \;\;\; \varphi \nat h\mapsto G^1\cdot \varphi \cdot F^2\nat 
G^2hF^1.
\end{eqnarray*} 
\end{proposition}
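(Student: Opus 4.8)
The plan is to verify directly that the proposed map
\[
\Psi:{}_F{\cal A}_{F^{-1}}\nat H_F\rightarrow {\cal A}\nat H,\qquad
\Psi(\varphi \nat h)=G^1\cdot \varphi \cdot F^2\nat G^2hF^1,
\]
is an algebra isomorphism. Bijectivity should be the easy half: since $F$ is invertible with inverse $F^{-1}=G^1\ot G^2$, I would write down the candidate inverse $\Psi^{-1}(\varphi \nat h)=F^1\cdot \varphi \cdot G^2\nat F^2hG^1$ and confirm $\Psi\circ\Psi^{-1}=\Psi^{-1}\circ\Psi=\mathrm{id}$ using the counit normalization $(\va \ot id)(F)=(id\ot \va)(F)=1$ together with the $H$-bimodule algebra axioms for ${\cal A}$. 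The unit is preserved because $\Psi(1_{\cal A}\nat 1_{H_F})=G^1\cdot 1_{\cal A}\cdot F^2\nat G^2F^1$, which collapses to $1_{\cal A}\nat 1_H$ again by the counit conditions.

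The substance is multiplicativity. I would expand both sides of $\Psi\big((\varphi\nat h)(\varphi'\nat h')\big)=\Psi(\varphi\nat h)\Psi(\varphi'\nat h')$ using the explicit ${\cal A}\nat H$ product $(\varphi\nat h)(\varphi'\nat h')=(\varphi\cdot h'_2)(h_1\cdot \varphi')\nat h_2h'_1$, keeping in mind that on the left the product is the $\bullet$-product $\varphi\bullet\varphi'=(G^1\cdot\varphi\cdot F^1)(G^2\cdot\varphi'\cdot F^2)$, that the left action $h\ot a\mapsto h\cdot a$ is the \emph{same} map in $H_F$ and $H$, and that the comultiplication of $H_F$ is the twisted one $\Delta_F(h)=F\Delta(h)F^{-1}$. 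So on the source side the coproducts producing $h_1,h_2$ and $h'_1,h'_2$ must be read as $\Delta_F$, introducing extra copies of $F$ and $F^{-1}$ that have to be tracked and eventually cancelled against the $G^1\cdot(-)\cdot F^2$ decoration coming from $\Psi$.

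The hard part will be the bookkeeping of the multiple independent copies of $F$ and $F^{-1}$ and steering them into position so that the cocycle identity $(1\ot F)(id\ot\Delta)(F)=(F\ot 1)(\Delta\ot id)(F)$ can be applied. Concretely, after expanding I expect to accumulate terms of the shape $G^1\cdot\varphi\cdot(\text{stuff})$ and $G^2\cdot(h\cdot\varphi')\cdot(\text{stuff})$ where the ``stuff'' is a product of several $F^i$, $G^j$ factors sitting inside the bimodule actions; the $H$-bimodule algebra compatibilities $(h\cdot\varphi)\cdot h'=h\cdot(\varphi\cdot h')$ and the module-algebra rules let me regroup these, and the comultiplicativity/associativity encoded in the cocycle condition is what forces the twisted coproduct contributions on the source to match the honest coproduct contributions on the target. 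I would organize the computation so that the 2-cocycle identity is invoked exactly once on each tensor factor, which is the natural generalization of how invariance under twisting is proved for the plain smash product $A\#H$; the L-R feature only adds the symmetric right-hand decorations $\cdot F^2$ (resp.\ $\cdot F^1$) and the extra comultiplication slot $h'_2$ (resp.\ $h_1$), which are handled by the same cocycle relation applied on the opposite side. Once both expansions are reduced to a common normal form, comparing termwise yields equality, finishing the proof.
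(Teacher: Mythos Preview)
Your direct-verification plan is sound: the inverse you wrote down is correct, and multiplicativity does reduce to repeated use of the 2-cocycle identity together with the $H$-bimodule algebra axioms, applied symmetrically on the left and right sides. This is a perfectly valid proof.

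It is, however, a different route from what the paper actually does. The paper states Proposition~\ref{invutwlrsmash} without a direct proof and instead uses it as a guiding example; the formal justification comes only after Theorem~\ref{invundtw} is proved, via the final Remark, which exhibits the proposition as the special case $A={\cal A}$, $B=H$, $R(h\ot\varphi)=h_1\cdot\varphi\ot h_2$, $Q(\varphi\ot h)=\varphi\cdot h_2\ot h_1$, $\rho_r(\varphi)=G^1\cdot\varphi\ot G^2$, $\rho_l(\varphi)=F^1\ot\varphi\cdot F^2$, $\lambda_r(\varphi)=F^1\cdot\varphi\ot F^2$, $\lambda_l(\varphi)=G^1\ot\varphi\cdot G^2$. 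Your approach is more elementary and self-contained---no need to check the long list of hypotheses (\ref{4.2})--(\ref{extra14})---while the paper's approach trades that directness for placing the result inside a general ``invariance under twisting'' framework that simultaneously covers ordinary twisted tensor products. One small caveat on your outline: expect to invoke the cocycle condition (and its consequence for $F^{-1}$) more than once per side rather than ``exactly once'', since both $\Delta_F(h)$ and $\Delta_F(h')$ contribute twist factors that interact with the decorations from $\Psi$.
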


Our aim  is to prove an ''invariance under twisting'' for 
L-R-twisted tensor products of algebras, that is, to find a common 
generalization of Proposition \ref{invutwlrsmash} and of Theorem 4.4 in 
\cite{jlpvo}. 
\begin{proposition} \label{pregat}
Let $A, B$ be two algebras and $R:B\ot A\rightarrow A\ot B$, 
$Q:A\otimes B\rightarrow A\otimes B$ two linear
maps, with notation $R(b\ot a)=a_R\ot b_R$ and 
$Q(a\otimes b)=a_Q\otimes b_Q$, for all $a\in A$ and
$b\in B$, such that (\ref{comb1}) holds.  
Assume that we are given linear maps 
$\mu _l:B\ot A\rightarrow A$, $b\ot a\mapsto b\cdot a$,  
$\mu _r:A\otimes B\rightarrow A$, $a\otimes b\mapsto a\cdot b$, 
$\rho _r:A\rightarrow A\ot B$, $\rho _r(a)= a_{(0)}\ot a_{(1)}$, 
$\rho _l:A\rightarrow B\otimes A$, $\rho _l(a)=a_{<-1>}\ot a_{<0>}$, 
and denote $a\bullet a':=(a_{(0)}\cdot a'_{<-1>})(a_{(1)}\cdot a'_{<0>})$, 
for all $a, a'\in A$. Assume that
the following conditions are satisfied: $\rho _r(1)=1\ot 1$, 
$\rho _l(1)=1\ot 1$, $1\cdot a=a=a\cdot 1$, 
$a_{(0)}(a_{(1)}\cdot 1)=a$, $(1\cdot a_{<-1>})a_{<0>}=a$, and
\begin{eqnarray}
&&b\cdot (a_{(0)}(a_{(1)}\cdot a'))=a_{(0)_R}(b_Ra_{(1)}\cdot a'), 
\label{4.2}\\
&&((a\cdot a'_{<-1>})a'_{<0>})\cdot b=(a\cdot a'_{<-1>}b_Q)a'_{<0>_Q}, 
\label{sup1} \\
&&\rho _r(a\bullet a')=(a_{(0)}\cdot a'_{(0)_{R_{<-1>}}})
a'_{(0)_{R_{<0>}}}\ot a_{(1)_R}a'_{(1)}, \label{sup2} \\
&&\rho _l(a\bullet a')=a_{<-1>}a'_{<-1>_Q}\otimes a_{<0>_{Q_{(0)}}}
(a_{<0>_{Q_{(1)}}}\cdot a'_{<0>}), \label{sup3}\\
&&a_{(0)_{<-1>}}\otimes a_{(0)_{<0>}}\otimes a_{(1)}=a_{<-1>}\otimes 
a_{<0>_{(0)}}\otimes a_{<0>_{(1)}}, \label{sup4} \\
&&a_{Q_{(0)}}\otimes a_{Q_{(1)}}\otimes b_Q=a_{(0)_Q}\otimes a_{(1)}
\otimes b_Q, \label{sup5} \\
&&a_{R_{<-1>}}\otimes a_{R_{<0>}}\otimes b_R=a_{<-1>}\otimes 
a_{<0>_R}\otimes b_R, \label{sup6}
\end{eqnarray}
for all $a, a'\in A$ and $b\in B$. Then $(A, \bullet , 1)$ is an
associative unital algebra, denoted in what follows by $\tilde{A}$.
\end{proposition}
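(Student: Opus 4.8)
The plan is to prove that $(A, \bullet, 1)$ is an associative unital algebra. The unit axiom is straightforward: using $\rho_r(1) = 1 \otimes 1$, $\rho_l(1) = 1 \otimes 1$, together with the normalization conditions $1 \cdot a = a = a \cdot 1$, one computes $1 \bullet a$ and $a \bullet 1$ directly from the definition $a \bullet a' = (a_{(0)} \cdot a'_{<-1>})(a_{(1)} \cdot a'_{<0>})$. For $1 \bullet a$, we get $(1_{(0)} \cdot a_{<-1>})(1_{(1)} \cdot a_{<0>}) = (1 \cdot a_{<-1>}) a_{<0>} = a$ by the hypothesis $(1 \cdot a_{<-1>}) a_{<0>} = a$; similarly $a \bullet 1 = a_{(0)}(a_{(1)} \cdot 1) = a$ using the other normalization hypothesis. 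So the bulk of the work is associativity.

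For associativity, the plan is to expand both $(a \bullet a') \bullet a''$ and $a \bullet (a' \bullet a'')$ using the definition of $\bullet$, and then transport each into a common normal form. First I would compute $(a \bullet a') \bullet a''$: by definition this is $\big((a \bullet a')_{(0)} \cdot a''_{<-1>}\big)\big((a \bullet a')_{(1)} \cdot a''_{<0>}\big)$, and here the key move is to substitute the formula (\ref{sup2}) for $\rho_r(a \bullet a') = (a \bullet a')_{(0)} \otimes (a \bullet a')_{(1)}$. This replaces the outer $\rho_r$ applied to a $\bullet$-product with an expression involving $R$ and nested $\rho_r, \rho_l$ on the individual factors. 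Symmetrically, for $a \bullet (a' \bullet a'')$ I would substitute formula (\ref{sup3}) for $\rho_l(a' \bullet a'')$. After these substitutions, both sides become expressions built from $\mu_l$, $\mu_r$, $R$, $Q$, $\rho_r$, $\rho_l$ applied to $a, a', a''$, and the goal is to match them term by term.

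The main obstacle, and the heart of the proof, will be reconciling these two expanded expressions. The plan is to push both toward a symmetric form in which all the compatibility conditions have been applied. The crucial auxiliary identities are (\ref{4.2}) and (\ref{sup1}), which describe how the bimodule actions $\mu_l, \mu_r$ interact with the $\rho$-coactions (these are the ''module-algebra-type'' compatibilities allowing one to move an action across a coaction via the twisting map $R$ or $Q$). Alongside these, the ''cocommutativity-type'' conditions (\ref{sup4}), (\ref{sup5}), (\ref{sup6}) let me interchange the order in which $\rho_r$ and $\rho_l$ (and the twisting maps) are applied, and (\ref{comb1}) governs the commutation of $R$ with $Q$. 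I expect the calculation to proceed by carefully applying (\ref{sup2}) and (\ref{sup3}) to open up both triple products, then repeatedly using (\ref{4.2}) and (\ref{sup1}) to slide the actions through, and finally invoking (\ref{sup4})--(\ref{sup6}) and (\ref{comb1}) to align the indices on $R$ and $Q$ so that the two sides literally coincide. The delicate bookkeeping is tracking which copy of $R$ or $Q$ each subscript refers to, since several independent copies appear once the coactions are nested; a disciplined labelling convention (as in the earlier proofs with $r, q, {\cal R}, \tilde Q$) will be essential to avoid errors.
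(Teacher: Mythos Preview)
Your proposal is correct and follows essentially the same route as the paper: expand $(a\bullet a')\bullet a''$ via (\ref{sup2}) and then simplify with (\ref{sup1}), (\ref{sup6}), (\ref{sup4}); expand $a\bullet(a'\bullet a'')$ via (\ref{sup3}) and then simplify with (\ref{4.2}), (\ref{sup5}), (\ref{comb1}); the two resulting expressions coincide. The unit argument you give is exactly the one in the paper.
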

\begin{proof}
Obviously, $1$ is the unit, so we only prove the associativity of 
$\bullet $; we compute: 
\begin{eqnarray*}
(a\bullet a')\bullet a''&=&((a\bullet a')_{(0)}\cdot a''_{<-1>})
((a\bullet a')_{(1)}\cdot a''_{<0>})\\
&\overset{(\ref{sup2})}{=}&[((a_{(0)}\cdot a'_{(0)_{R_{<-1>}}})
a'_{(0)_{R_{<0>}}})\cdot a''_{<-1>}][a_{(1)_R}a'_{(1)}\cdot a''_{<0>}]\\
&\overset{(\ref{sup1})}{=}&(a_{(0)}\cdot a'_{(0)_{R_{<-1>}}}a''_{<-1>_Q})
a'_{(0)_{R_{<0>_Q}}}(a_{(1)_R}a'_{(1)}\cdot a''_{<0>})\\
&\overset{(\ref{sup6})}{=}&(a_{(0)}\cdot a'_{(0)_{<-1>}}a''_{<-1>_Q})
a'_{(0)_{<0>_{R_Q}}}(a_{(1)_R}a'_{(1)}\cdot a''_{<0>})\\
&\overset{(\ref{sup4})}{=}&(a_{(0)}\cdot a'_{<-1>}a''_{<-1>_Q})
a'_{<0>_{(0)_{R_Q}}}(a_{(1)_R}a'_{<0>_{(1)}}\cdot a''_{<0>}), 
\end{eqnarray*}
\begin{eqnarray*}
a\bullet (a'\bullet a'')&=&(a_{(0)}\cdot (a'\bullet a'')_{<-1>})
(a_{(1)}\cdot (a'\bullet a'')_{<0>})\\
&\overset{(\ref{sup3})}{=}&(a_{(0)}\cdot a'_{<-1>}a''_{<-1>_Q})
(a_{(1)}\cdot (a'_{<0>_{Q_{(0)}}}(a'_{<0>_{Q_{(1)}}}\cdot a''_{<0>})))\\
&\overset{(\ref{4.2})}{=}&(a_{(0)}\cdot a'_{<-1>}a''_{<-1>_Q})
a'_{<0>_{Q_{(0)_R}}}(a_{(1)_R}a'_{<0>_{Q_{(1)}}}\cdot a''_{<0>})\\
&\overset{(\ref{sup5})}{=}&(a_{(0)}\cdot a'_{<-1>}a''_{<-1>_Q})
a'_{<0>_{(0)_{Q_R}}}(a_{(1)_R}a'_{<0>_{(1)}}\cdot a''_{<0>})\\
&\overset{(\ref{comb1})}{=}&(a_{(0)}\cdot a'_{<-1>}a''_{<-1>_Q})
a'_{<0>_{(0)_{R_Q}}}(a_{(1)_R}a'_{<0>_{(1)}}\cdot a''_{<0>}),
\end{eqnarray*}
finishing the proof.
\end{proof}
\begin{theorem} \label{invundtw}
Assume that the hypotheses of Proposition \ref{pregat} are satisfied,
such that moreover $A\; _Q\otimes _RB$ is an L-R-twisted tensor product of 
algebras. Assume also that we are
given linear maps
$\lambda _r:A\rightarrow A\ot B$, $\lambda _r(a)=a_{[0]}\ot a_{[1]}$, and 
$\lambda _l:A\rightarrow B\otimes A$, $\lambda _l(a)=
a_{\{-1\}}\otimes a_{\{0\}}$,  
such that $\lambda _r(1)=1\ot 1$, $\lambda _l(1)=1\ot 1$ and
the following relations hold:
\begin{eqnarray}
&&a_{(0)_{[0]}}\ot a_{(0)_{[1]}}a_{(1)}=a\ot 1, \label{4.8}\\
&&a_{[0]_{(0)}}\ot a_{[0]_{(1)}}a_{[1]}=a\ot 1, \label{4.9}\\
&&a_{<-1>}a_{<0>_{\{-1\}}}\ot a_{<0>_{\{0\}}}=1\ot a, \label{extra1}\\
&&a_{\{-1\}}a_{\{0\}_{<-1>}}\ot a_{\{0\}_{<0>}}=1\ot a, \label{extra2}\\
&&a_{[0]_{\{-1\}}}\ot a_{[0]_{\{0\}}}\ot a_{[1]}=
a_{\{-1\}}\ot a_{\{0\}_{[0]}}\ot a_{\{0\}_{[1]}}, \label{extra6}\\
&&a_{[0]_{<-1>}}\ot a_{[0]_{<0>}}\ot a_{[1]}=
a_{<-1>}\ot a_{<0>_{[0]}}\ot a_{<0>_{[1]}}, \label{extra11}\\
&&a_{(0)_{\{-1\}}}\ot a_{(0)_{\{0\}}}\ot a_{(1)}=
a_{\{-1\}}\ot a_{\{0\}_{(0)}}\ot a_{\{0\}_{(1)}}, \label{extra13}\\
&&\lambda _r(aa')=a_{[0]_{(0)}}(a_{[0]_{(1)}}\cdot a'_{R_{[0]}})\ot 
a'_{R_{[1]}}a_{[1]_R}, \label{4.7}\\
&&\lambda _l(aa')=a'_{\{-1\}_Q}a_{Q_{\{-1\}}}\ot (a_{Q_{\{0\}}}\cdot 
a'_{\{0\}_{<-1>}})a'_{\{0\}_{<0>}}, \label{extra3} \\
&&\rho _l((a\cdot a'_{<-1>})a'_{<0>})=a_{<-1>}a'_{<-1>_Q}\ot 
a_{<0>_Q}a'_{<0>}, \label{extra4}\\
&&a_{(0)_{<-1>_R}}a'_{(1)_Q}\ot a_{(0)_{<0>_Q}}\ot a'_{(0)_{<0>_R}}\ot 
a'_{(0)_{<-1>}}\ot a_{(1)}\nonumber \\
&&\;\;\;\;\;\;\;\;\;\;=a'_{(1)}a_{(0)_{<-1>}}\ot a_{(0)_{<0>}}\ot 
a'_{(0)_{<0>}}\ot a'_{(0)_{<-1>}}\ot a_{(1)}, \label{extra5}\\
&&a'_{<0>_{Q_{\{-1\}_R}}}a_{(1)}\ot a'_{<0>_{Q_{\{0\}}}}\ot a'_{<-1>}\ot 
a_{(0)_R}\ot b_Q\nonumber \\
&&\;\;\;\;\;\;\;\;\;\;=a_{(1)_q}a'_{<0>_{Q_{q_{\{-1\}}}}}\ot 
a'_{<0>_{Q_{q_{\{0\}}}}}\ot a'_{<-1>}\ot a_{(0)}\ot b_Q, \label{extra7}\\
&&a_{(0)_{R_{[0]}}}\ot a'_{<-1>}a_{(0)_{R_{[1]_Q}}}\ot a'_{<0>_Q}\ot 
a_{(1)}\ot b_R\nonumber \\
&&\;\;\;\;\;\;\;\;\;\;
=a_{(0)_{r_{R_{[0]}}}}\ot a_{(0)_{r_{R_{[1]}}}}a'_{<-1>_R}
\ot a'_{<0>}\ot a_{(1)}\ot b_r, \label{extra8}\\
&&a_{<0>_{Q_{\{0\}_{(0)_R}}}}\ot a_{<0>_{Q_{\{0\}_{(1)}}}}\ot a_{<-1>}\ot 
a_{<0>_{Q_{\{-1\}}}}\ot b_R\ot b'_Q\nonumber \\
&&\;\;\;\;\;\;\;\;\;\;=a_{(0)_{R_{<0>_{Q_{\{0\}}}}}}\ot a_{(1)}\ot 
a_{(0)_{R_{<-1>}}}\ot a_{(0)_{R_{<0>_{Q_{\{-1\}}}}}}\ot b_R\ot b'_Q, 
\label{extra9} \\
&&a_{(0)_{R_{[1]}}}\ot a_{(0)_{R_{[0]_{<0>_Q}}}}\ot a_{(0)_{R_{[0]_{<-1>}}}}
\ot a_{(1)}\ot b_R\ot b'_Q\nonumber \\
&&\;\;\;\;\;\;\;\;\;\;=
a_{(0)_{<0>_{R_{Q_{[1]}}}}}\ot a_{(0)_{<0>_{R_{Q_{[0]}}}}}\ot 
a_{(0)_{<-1>}}\ot a_{(1)}\ot b_R\ot b'_Q, \label{extra10}\\
&&(a_{(0)}\cdot b')_{R_{[0]_{(0)}}}\ot (a_{(0)}\cdot b')_{R_{[0]_{(1)}}}
\ot (a_{(0)}\cdot b')_{R_{[1]}}\ot b_R\ot a_{(1)}\nonumber \\
&&\;\;\;\;\;\;\;\;\;\;=a_{(0)_{R_{[0]_{(0)}}}}\cdot b'\ot 
a_{(0)_{R_{[0]_{(1)}}}}\ot a_{(0)_{R_{[1]}}}\ot b_R\ot a_{(1)}, 
\label{extra12}\\
&&a'_{<-1>}\ot (b\cdot a'_{<0>})_{Q_{\{-1\}}}\ot 
(b\cdot a'_{<0>})_{Q_{\{0\}_{<-1>}}}\ot 
(b\cdot a'_{<0>})_{Q_{\{0\}_{<0>}}}\ot b'_Q\nonumber \\
&&\;\;\;\;\;\;\;\;\;\;=a'_{<-1>}\ot a'_{<0>_{Q_{\{-1\}}}}\ot 
a'_{<0>_{Q_{\{0\}_{<-1>}}}}\ot b\cdot a'_{<0>_{Q_{\{0\}_{<0>}}}}
\ot b'_Q, \label{extra14}
\end{eqnarray}
for all $a, a'\in A$ and $b, b'\in B$. Define the maps
\begin{eqnarray}
&&\tilde{R}:B\ot \tilde{A}\rightarrow \tilde{A}\ot B, \;\;\; \tilde{R}(b\ot  
a)=a_{(0)_{R_{[0]}}}\ot a_{(0)_{R_{[1]}}}b_Ra_{(1)}, \label{4.10} \\
&&\tilde{Q}:\tilde{A}\ot B\rightarrow \tilde{A}\ot B, \;\;\;
\tilde{Q}(a\ot b)=a_{<0>_{Q_{\{0\}}}}\ot a_{<-1>}b_Qa_{<0>_{Q_{\{-1\}}}}.
\label{Qtilde}
\end{eqnarray}
Then $\tilde{A}\; _{\tilde{Q}}\ot _{\tilde{R}}B$ is an L-R-twisted tensor 
product of algebras and we have an algebra isomorphism
\begin{eqnarray*}
&&\tilde{A}\; _{\tilde{Q}}\ot _{\tilde{R}}B\simeq A\; _Q\ot _RB, 
\;\;a\ot b\mapsto a_{(0)_{<0>}}\ot
a_{(1)}ba_{(0)_{<-1>}}.
\end{eqnarray*} 
\end{theorem}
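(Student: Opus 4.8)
The plan is to verify the statement in two logically separate parts: first, that $\tilde{R}$ and $\tilde{Q}$ satisfy the eight axioms (\ref{tw0})--(\ref{comb2}) defining an L-R-twisted tensor product (so that $\tilde{A}\;_{\tilde{Q}}\ot_{\tilde{R}}B$ is a legitimate algebra), and second, that the explicit map $\theta:a\ot b\mapsto a_{(0)_{<0>}}\ot a_{(1)}ba_{(0)_{<-1>}}$ is an algebra isomorphism onto $A\;_Q\ot_RB$. Since $\tilde{A}=(A,\bullet,1)$ is already known to be associative by Proposition \ref{pregat}, the unit axioms (\ref{tw0}),(\ref{tw0'}) will follow immediately from the normalization hypotheses $\lambda_r(1)=\lambda_l(1)=1\ot 1$, $\rho_r(1)=\rho_l(1)=1\ot 1$, together with (\ref{tw0}),(\ref{tw0'}) for the original $R,Q$; I would dispose of these first. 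The substance lies in the four multiplicativity-type axioms (\ref{tw4}),(\ref{tw5}),(\ref{tw4'}),(\ref{tw5'}) and the two compatibility axioms (\ref{comb1}),(\ref{comb2}), now phrased in terms of the \emph{new} product $\bullet$ on $\tilde{A}$.

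First I would tackle (\ref{tw4}) for $\tilde{R}$: expanding $((a\bullet a')_{\tilde R})$ using the definition (\ref{4.10}) forces the coproduct-like expansion $\rho_r(a\bullet a')$ and $\lambda_r(aa')$ into the computation, which is exactly why hypotheses (\ref{sup2}) and (\ref{4.7}) are present. The strategy throughout is mechanical but disciplined: each new axiom for $\tilde R,\tilde Q$ should be reduced, by unfolding the definitions (\ref{4.10})--(\ref{Qtilde}) and the formula $a\bullet a'=(a_{(0)}\cdot a'_{<-1>})(a_{(1)}\cdot a'_{<0>})$, to a statement about the original $R,Q$ and the structure maps $\mu_l,\mu_r,\rho_l,\rho_r,\lambda_l,\lambda_r$; then one applies the appropriate ``transport'' relations (\ref{sup2})--(\ref{sup6}), (\ref{4.8})--(\ref{extra14}) to rewrite indices until the two sides visibly agree. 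The compatibility relations (\ref{comb1}),(\ref{comb2}) for the original maps are available and will be invoked to commute $R$ past $Q$ at the crucial moment, exactly as in the proofs of Propositions \ref{LRtwpr} and \ref{izobij}.

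For the isomorphism, I would first check that $\theta$ is bijective by exhibiting its inverse; the natural candidate sends $a\ot b\mapsto a_{\{0\}}\ot a_{\{-1\}}(\cdots)$ built from the ``tilde'' comodule maps $\lambda_l,\lambda_r$, and the pairs of relations (\ref{4.8})--(\ref{4.9}) and (\ref{extra1})--(\ref{extra2}) are precisely the antipode-like identities guaranteeing that $\theta$ and this candidate are mutually inverse. The remaining and most delicate task is multiplicativity of $\theta$: one computes $\theta((a\ot b)(a'\ot b'))$ in $\tilde{A}\;_{\tilde{Q}}\ot_{\tilde{R}}B$, where the product already involves $\bullet$, $\tilde R$ and $\tilde Q$, and compares it with $\theta(a\ot b)\,\theta(a'\ot b')$ computed in $A\;_Q\ot_RB$. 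This is where the long ``pentagon/hexagon-type'' relations (\ref{extra5}) and (\ref{extra7})--(\ref{extra14}) earn their keep: they are exactly the five- and six-fold tensor identities needed to reconcile the two orders in which the actions/coactions and the braidings $R,Q$ get applied.

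The main obstacle, as in all ``invariance under twisting'' arguments, is bookkeeping: the expressions carry four distinct kinds of structure map (two one-sided actions $\mu_l,\mu_r$ and four one-sided coactions $\rho_l,\rho_r,\lambda_l,\lambda_r$) decorated with nested copies of $R$ and $Q$, so the hard part is choosing, at each step, the unique relation among the roughly twenty hypotheses that moves exactly one index into the position required for the next cancellation. I expect the multiplicativity of $\theta$ to be the crux; the correct approach is to not expand everything at once but to peel off one tensor leg at a time—first matching the $B$-components using (\ref{extra7}),(\ref{extra14}), then the middle $A$-leg using (\ref{extra8})--(\ref{extra10}),(\ref{extra12}), and only finally the outer $A$-leg—so that the many relations are applied in a controlled sequence rather than all at once. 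Each individual rewriting is routine; the art is entirely in the ordering, and I would leave the fully expanded chains to the reader once the pattern of which relation feeds which step is made explicit.
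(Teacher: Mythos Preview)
Your plan is correct and matches the paper's approach: first verify the eight axioms (\ref{tw0})--(\ref{comb2}) for $\tilde R,\tilde Q$ relative to the algebra $(\tilde A,\bullet)$, then exhibit the explicit inverse $a\ot b\mapsto a_{[0]_{\{0\}}}\ot a_{[1]}ba_{[0]_{\{-1\}}}$ (using (\ref{sup4}), (\ref{4.8}), (\ref{4.9}), (\ref{extra1}), (\ref{extra2}), (\ref{extra6})) and finally check multiplicativity of $\theta$ by a direct chain of rewrites.

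One correction on the bookkeeping: your allocation of hypotheses is partly inverted. The long five- and six-tensor relations (\ref{extra7})--(\ref{extra10}), (\ref{extra12}), (\ref{extra14}) are \emph{not} what drive the multiplicativity of $\theta$; they are consumed in verifying the axioms for $\tilde R,\tilde Q$ (specifically, (\ref{extra12}) in (\ref{tw4}), (\ref{extra14}) in (\ref{tw4'}), (\ref{extra9})--(\ref{extra10}) in (\ref{comb1}), and (\ref{extra7})--(\ref{extra8}) in (\ref{comb2})). The multiplicativity of $\theta$ itself is comparatively cleaner: after expanding both sides with (\ref{tw5}), (\ref{tw5'}) and the definition of $\bullet$, the paper reduces $\theta((a\ot b)(a'\ot b'))$ via (\ref{sup2}), (\ref{4.9}), (\ref{extra4}), (\ref{sup4}), (\ref{extra2}), (\ref{sup5}), (\ref{sup6}) and then matches it to $\theta(a\ot b)\theta(a'\ot b')$ using the single ``big'' relation (\ref{extra5}). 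So your leg-by-leg strategy for $\theta$ is more elaborate than necessary, while the axiom checks for $\tilde R,\tilde Q$ are where the heavy tensor identities actually live.
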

\begin{proof}
We have to prove that $\tilde{R}$ and $\tilde{Q}$ satisfy (\ref{tw0})--
(\ref{comb2}) for the algebras $\tilde{A}$ and $B$. We will only prove 
(\ref{tw4}), (\ref{tw4'}), (\ref{comb1}), (\ref{comb2}), while 
(\ref{tw0}), (\ref{tw0'}), (\ref{tw5}), (\ref{tw5'}) are much easier and are  
left to the reader.\\ 
\underline{Proof of (\ref{tw4})}
\begin{eqnarray*}
(a\bullet a')_{\tilde{R}}\ot b_{\tilde{R}}&=&(a\bullet a')_{(0)_{R_{[0]}}}
\ot (a\bullet a')_{(0)_{R_{[1]}}}b_R(a\bullet a')_{(1)}\\
&\overset{(\ref{sup2})}{=}&[(a_{(0)}\cdot a'_{(0)_{r_{<-1>}}})
a'_{(0)_{r_{<0>}}}]_{R_{[0]}}\\
&&\ot  
[(a_{(0)}\cdot a'_{(0)_{r_{<-1>}}})
a'_{(0)_{r_{<0>}}}]_{R_{[1]}}b_Ra_{(1)_r}a'_{(1)}\\
&\overset{(\ref{tw4})}{=}&
[(a_{(0)}\cdot a'_{(0)_{r_{<-1>}}})_R
a'_{(0)_{r_{<0>_{{\cal R}}}}}]_{[0]}\\
&& \ot [(a_{(0)}\cdot a'_{(0)_{r_{<-1>}}})_R
a'_{(0)_{r_{<0>_{{\cal R}}}}}]_{[1]}b_{R_{{\cal R}}}a_{(1)_r}a'_{(1)}\\
&\overset{(\ref{sup6})}{=}&
[(a_{(0)}\cdot a'_{(0)_{<-1>}})_R
a'_{(0)_{<0>_{r_{{\cal R}}}}}]_{[0]}\\
&& \ot [(a_{(0)}\cdot a'_{(0)_{<-1>}})_R
a'_{(0)_{<0>_{r_{{\cal R}}}}}]_{[1]}
b_{R_{{\cal R}}}a_{(1)_r}a'_{(1)}\\
&\overset{(\ref{4.7})}{=}&
(a_{(0)}\cdot a'_{(0)_{<-1>}})_{R_{[0]_{(0)}}}
((a_{(0)}\cdot a'_{(0)_{<-1>}})_{R_{[0]_{(1)}}}\cdot 
a'_{(0)_{<0>_{r_{{\cal R}_{\overline{r}_{[0]}}}}}})\\
&& \ot a'_{(0)_{<0>_{r_{{\cal R}_{\overline{r}_{[1]}}}}}}
(a_{(0)}\cdot a'_{(0)_{<-1>}})_{R_{[1]_{\overline{r}}}}
b_{R_{{\cal R}}}a_{(1)_r}a'_{(1)}\\
&\overset{(\ref{extra12})}{=}&
(a_{(0)_{R_{[0]_{(0)}}}}\cdot a'_{(0)_{<-1>}})
(a_{(0)_{R_{[0]_{(1)}}}}\cdot a'_{(0)_{<0>_{r_{{\cal R}_
{\overline{r}_{[0]}}}}}})\\
&&\ot a'_{(0)_{<0>_{r_{{\cal R}_
{\overline{r}_{[1]}}}}}}a_{(0)_{R_{[1]_{\overline{r}}}}}
b_{R_{{\cal R}}}a_{(1)_r}a'_{(1)}, 
\end{eqnarray*}
\begin{eqnarray*}
a_{\tilde{R}}\bullet a'_{\tilde{r}}\ot b_{\tilde{R}_{\tilde{r}}}&=&
a_{(0)_{R_{[0]}}}\bullet a'_{\tilde{r}}\ot 
(a_{(0)_{R_{[1]}}}b_Ra_{(1)})_{\tilde{r}}\\
&=&a_{(0)_{R_{[0]}}}\bullet a'_{(0)_{r_{[0]}}}\ot
a'_{(0)_{r_{[1]}}} 
(a_{(0)_{R_{[1]}}}b_Ra_{(1)})_{r}a'_{(1)}\\
&\overset{(\ref{tw5})}{=}&
a_{(0)_{R_{[0]}}}\bullet a'_{(0)_{r_{{\cal R}_{\overline{r}_{[0]}}}}}\ot
a'_{(0)_{r_{{\cal R}_{\overline{r}_{[1]}}}}}
a_{(0)_{R_{[1]_{\overline{r}}}}}b_{R_{{\cal R}}}a_{(1)_r}a'_{(1)}\\
&=&(a_{(0)_{R_{[0]_{(0)}}}}\cdot a'_{(0)_{r_{{\cal R}_{\overline{r}_{[0]
_{<-1>}}}}}})  
(a_{(0)_{R_{[0]_{(1)}}}}\cdot a'_{(0)_{r_{{\cal R}_{\overline{r}_{[0]
_{<0>}}}}}})\\
&&\ot a'_{(0)_{r_{{\cal R}_{\overline{r}_{[1]}}}}}
a_{(0)_{R_{[1]_{\overline{r}}}}}b_{R_{{\cal R}}}a_{(1)_r}a'_{(1)}\\
&\overset{(\ref{extra11})}{=}&
(a_{(0)_{R_{[0]_{(0)}}}}\cdot a'_{(0)_{r_{{\cal R}_{\overline{r}_{<-1>}}}}})  
(a_{(0)_{R_{[0]_{(1)}}}}\cdot a'_{(0)_{r_{{\cal R}_{\overline{r}_{<0>
_{[0]}}}}}})\\
&&\ot a'_{(0)_{r_{{\cal R}_{\overline{r}_{<0>_{[1]}}}}}}
a_{(0)_{R_{[1]_{\overline{r}}}}}b_{R_{{\cal R}}}a_{(1)_r}a'_{(1)}\\
&\overset{(\ref{sup6})}{=}&
(a_{(0)_{R_{[0]_{(0)}}}}\cdot a'_{(0)_{<-1>}})
(a_{(0)_{R_{[0]_{(1)}}}}\cdot a'_{(0)_{<0>_{r_{{\cal R}_
{\overline{r}_{[0]}}}}}})\\
&&\ot a'_{(0)_{<0>_{r_{{\cal R}_
{\overline{r}_{[1]}}}}}}a_{(0)_{R_{[1]_{\overline{r}}}}}
b_{R_{{\cal R}}}a_{(1)_r}a'_{(1)}, \;\;\;q.e.d. 
\end{eqnarray*} 
\underline{Proof of (\ref{tw4'})}\\
\begin{eqnarray*}
(a\bullet a')_{\tilde{Q}}\ot b_{\tilde{Q}}&=&
(a\bullet a')_{<0>_{Q_{\{0\}}}}\ot (a\bullet a')_{<-1>}b_Q
(a\bullet a')_{<0>_{Q_{\{-1\}}}}\\
&\overset{(\ref{sup3}), (\ref{sup5})}{=}&
[a_{<0>_{(0)_q}}(a_{<0>_{(1)}}\cdot a'_{<0>})]_{Q_{\{0\}}}\ot 
a_{<-1>}a'_{<-1>_q}b_Q\\
&&[a_{<0>_{(0)_q}}(a_{<0>_{(1)}}\cdot a'_{<0>})]_{Q_{\{-1\}}}\\
&\overset{(\ref{tw4'})}{=}&
[a_{<0>_{(0)_{q_{\overline{q}}}}}(a_{<0>_{(1)}}\cdot a'_{<0>})_Q]_{\{0\}}\ot 
a_{<-1>}a'_{<-1>_q}b_{Q_{\overline{q}}}\\
&&[a_{<0>_{(0)_{q_{\overline{q}}}}}(a_{<0>_{(1)}}\cdot a'_{<0>})_Q]_{\{-1\}}\\
&\overset{(\ref{extra3})}{=}&
(a_{<0>_{(0)_{q_{\overline{q}_{\overline{Q}_{\{0\}}}}}}}\cdot 
(a_{<0>_{(1)}}\cdot a'_{<0>})_{Q_{\{0\}_{<-1>}}})
(a_{<0>_{(1)}}\cdot a'_{<0>})_{Q_{\{0\}_{<0>}}}\\ 
&&a_{<-1>}a'_{<-1>_q}b_{Q_{\overline{q}}}
(a_{<0>_{(1)}}\cdot a'_{<0>})_{Q_{\{-1\}_{\overline{Q}}}}
a_{<0>_{(0)_{q_{\overline{q}_{\overline{Q}_{\{-1\}}}}}}}\\
&\overset{(\ref{extra14})}{=}&
(a_{<0>_{(0)_{q_{\overline{q}_{\overline{Q}_{\{0\}}}}}}}\cdot 
a'_{<0>_{Q_{\{0\}_{<-1>}}}})
(a_{<0>_{(1)}}\cdot a'_{<0>_{Q_{\{0\}_{<0>}}}})\\ 
&&a_{<-1>}a'_{<-1>_q}b_{Q_{\overline{q}}}
a'_{<0>_{Q_{\{-1\}_{\overline{Q}}}}}
a_{<0>_{(0)_{q_{\overline{q}_{\overline{Q}_{\{-1\}}}}}}}, 
\end{eqnarray*}
\begin{eqnarray*}
a_{\tilde{q}}\bullet a'_{\tilde{Q}}\ot b_{\tilde{Q}_{\tilde{q}}}&=&
a_{\tilde{q}}\bullet a'_{<0>_{Q_{\{0\}}}}\ot 
(a'_{<-1>}b_Qa'_{<0>_{Q_{\{-1\}}}})_{\tilde{q}}\\
&=& a_{<0>_{q_{\{0\}}}}\bullet  a'_{<0>_{Q_{\{0\}}}}\ot a_{<-1>}
(a'_{<-1>}b_Qa'_{<0>_{Q_{\{-1\}}}})_qa_{<0>_{q_{\{-1\}}}}\\
&\overset{(\ref{tw5'})}{=}&
a_{<0>_{q_{\overline{q}_{\overline{Q}_{\{0\}}}}}}\bullet  
a'_{<0>_{Q_{\{0\}}}}\ot a_{<-1>}
a'_{<-1>_q}b_{Q_{\overline{q}}}a'_{<0>_{Q_{\{-1\}_{\overline{Q}}}}}
a_{<0>_{q_{\overline{q}_{\overline{Q}_{\{-1\}}}}}}\\
&=&(a_{<0>_{q_{\overline{q}_{\overline{Q}_{\{0\}_{(0)}}}}}}\cdot   
a'_{<0>_{Q_{\{0\}_{<-1>}}}})
(a_{<0>_{q_{\overline{q}_{\overline{Q}_{\{0\}_{(1)}}}}}}\cdot   
a'_{<0>_{Q_{\{0\}_{<0>}}}})\\
&&\ot a_{<-1>}
a'_{<-1>_q}b_{Q_{\overline{q}}}a'_{<0>_{Q_{\{-1\}_{\overline{Q}}}}}
a_{<0>_{q_{\overline{q}_{\overline{Q}_{\{-1\}}}}}}\\
&\overset{(\ref{extra13})}{=}&
(a_{<0>_{q_{\overline{q}_{\overline{Q}_{(0)_{\{0\}}}}}}}\cdot   
a'_{<0>_{Q_{\{0\}_{<-1>}}}})
(a_{<0>_{q_{\overline{q}_{\overline{Q}_{(1)}}}}}\cdot   
a'_{<0>_{Q_{\{0\}_{<0>}}}})\\
&&\ot a_{<-1>}
a'_{<-1>_q}b_{Q_{\overline{q}}}a'_{<0>_{Q_{\{-1\}_{\overline{Q}}}}}
a_{<0>_{q_{\overline{q}_{\overline{Q}_{(0)_{\{-1\}}}}}}}\\
&\overset{(\ref{sup5})}{=}&
(a_{<0>_{(0)_{q_{\overline{q}_{\overline{Q}_{\{0\}}}}}}}\cdot 
a'_{<0>_{Q_{\{0\}_{<-1>}}}})
(a_{<0>_{(1)}}\cdot a'_{<0>_{Q_{\{0\}_{<0>}}}})\\ 
&&a_{<-1>}a'_{<-1>_q}b_{Q_{\overline{q}}}
a'_{<0>_{Q_{\{-1\}_{\overline{Q}}}}}
a_{<0>_{(0)_{q_{\overline{q}_{\overline{Q}_{\{-1\}}}}}}}, \;\;\;q.e.d.
\end{eqnarray*}
\underline{Proof of (\ref{comb1})}\\
\begin{eqnarray*}
b_{\tilde{R}}\ot a_{\tilde{R}_{\tilde{Q}}}\ot b'_{\tilde{Q}}&=&
a_{(0)_{R_{[1]}}}b_Ra_{(1)}\ot (a_{(0)_{R_{[0]}}})_{\tilde{Q}}\ot 
b'_{\tilde{Q}}\\
&=&a_{(0)_{R_{[1]}}}b_Ra_{(1)}\ot a_{(0)_{R_{[0]_{<0>_{Q_{\{0\}}}}}}}\ot  
a_{(0)_{R_{[0]_{<-1>}}}}
b'_Qa_{(0)_{R_{[0]_{<0>_{Q_{\{-1\}}}}}}}\\
&\overset{(\ref{extra10})}{=}&
a_{(0)_{<0>_{R_{Q_{[1]}}}}}b_Ra_{(1)}\ot 
a_{(0)_{<0>_{R_{Q_{[0]_{\{0\}}}}}}}\ot  
a_{(0)_{<-1>}}b'_Qa_{(0)_{<0>_{R_{Q_{[0]_{\{-1\}}}}}}}\\
&\overset{(\ref{extra6})}{=}&
a_{(0)_{<0>_{R_{Q_{\{0\}_{[1]}}}}}}b_Ra_{(1)}\ot 
a_{(0)_{<0>_{R_{Q_{\{0\}_{[0]}}}}}}\ot  
a_{(0)_{<-1>}}b'_Qa_{(0)_{<0>_{R_{Q_{\{-1\}}}}}},\\
\end{eqnarray*}
\begin{eqnarray*}
b_{\tilde{R}}\ot a_{\tilde{Q}_{\tilde{R}}}\ot b'_{\tilde{Q}}&=&
b_{\tilde{R}}\ot (a_{<0>_{Q_{\{0\}}}})_{\tilde{R}}\ot a_{<-1>}b'_Q
a_{<0>_{Q_{\{-1\}}}}\\
&=&a_{<0>_{Q_{\{0\}_{(0)_{R_{[1]}}}}}}b_Ra_{<0>_{Q_{\{0\}_{(1)}}}}\ot 
a_{<0>_{Q_{\{0\}_{(0)_{R_{[0]}}}}}}\ot a_{<-1>}b'_Q
a_{<0>_{Q_{\{-1\}}}}\\
&\overset{(\ref{extra9})}{=}&
a_{(0)_{R_{<0>_{Q_{\{0\}_{[1]}}}}}}b_Ra_{(1)}\ot 
a_{(0)_{R_{<0>_{Q_{\{0\}_{[0]}}}}}}\ot a_{(0)_{R_{<-1>}}}b'_Q
a_{(0)_{R_{<0>_{Q_{\{-1\}}}}}}\\
&\overset{(\ref{sup6})}{=}&
a_{(0)_{<0>_{R_{Q_{\{0\}_{[1]}}}}}}b_Ra_{(1)}\ot 
a_{(0)_{<0>_{R_{Q_{\{0\}_{[0]}}}}}}\ot  
a_{(0)_{<-1>}}b'_Qa_{(0)_{<0>_{R_{Q_{\{-1\}}}}}},\;\;\;q.e.d.
\end{eqnarray*}
\underline{Proof of (\ref{comb2})}\\
\begin{eqnarray*}
a_{\tilde{R}}\ot b_{\tilde{R}_{\tilde{Q}}}\ot a'_{\tilde{Q}}&=&
a_{(0)_{r_{[0]}}}\ot (a_{(0)_{r_{[1]}}}b_ra_{(1)})_{\tilde{Q}}\ot 
a'_{\tilde{Q}}\\
&=&
a_{(0)_{r_{[0]}}}\ot a'_{<-1>}(a_{(0)_{r_{[1]}}}b_ra_{(1)})_{Q}
a'_{<0>_{Q_{\{-1\}}}}
\ot a'_{<0>_{Q_{\{0\}}}}\\
&\overset{(\ref{tw5'})}{=}&
a_{(0)_{r_{[0]}}}\ot a'_{<-1>}a_{(0)_{r_{[1]_q}}}b_{r_Q}a_{(1)_
{\overline{q}}}
a'_{<0>_{q_{Q_{\overline{q}_{\{-1\}}}}}}
\ot a'_{<0>_{q_{Q_{\overline{q}_{\{0\}}}}}}\\
&\overset{(\ref{extra8})}{=}&
a_{(0)_{r_{R_{[0]}}}}\ot 
a_{(0)_{r_{R_{[1]}}}}a'_{<-1>_R}b_{r_Q}a_{(1)_q}
a'_{<0>_{Q_{q_{\{-1\}}}}}
\ot a'_{<0>_{Q_{q_{\{0\}}}}},
\end{eqnarray*}
\begin{eqnarray*}
a_{\tilde{R}}\ot b_{\tilde{Q}_{\tilde{R}}}\ot a'_{\tilde{Q}}&=&
a_{\tilde{R}}\ot (a'_{<-1>}b_Qa'_{<0>_{Q_{\{-1\}}}})_{\tilde{R}}\ot 
a'_{<0>_{Q_{\{0\}}}}\\
&=&
a_{(0)_{R_{[0]}}}\ot a_{(0)_{R_{[1]}}}
(a'_{<-1>}b_Qa'_{<0>_{Q_{\{-1\}}}})_Ra_{(1)}\ot 
a'_{<0>_{Q_{\{0\}}}}\\
&\overset{(\ref{tw5})}{=}&
a_{(0)_{R_{r_{{\cal R}_{[0]}}}}}\ot a_{(0)_{R_{r_{{\cal R}_{[1]}}}}}
a'_{<-1>_{{\cal R}}}b_{Q_r}a'_{<0>_{Q_{\{-1\}_R}}}a_{(1)}\ot 
a'_{<0>_{Q_{\{0\}}}}\\
&\overset{(\ref{comb2})}{=}&
a_{(0)_{R_{r_{{\cal R}_{[0]}}}}}\ot a_{(0)_{R_{r_{{\cal R}_{[1]}}}}}
a'_{<-1>_{{\cal R}}}b_{r_Q}a'_{<0>_{Q_{\{-1\}_R}}}a_{(1)}\ot 
a'_{<0>_{Q_{\{0\}}}}\\
&\overset{(\ref{extra7})}{=}&
a_{(0)_{r_{R_{[0]}}}}\ot 
a_{(0)_{r_{R_{[1]}}}}a'_{<-1>_R}b_{r_Q}a_{(1)_q}
a'_{<0>_{Q_{q_{\{-1\}}}}}
\ot a'_{<0>_{Q_{q_{\{0\}}}}},\;\;\;q.e.d.
\end{eqnarray*}

We prove now that the map 
$\varphi :\tilde{A}\; _{\tilde{Q}}\ot _{\tilde{R}}B\rightarrow A\; _Q\ot _RB$,    
$\varphi (a\ot b)=a_{(0)_{<0>}}\ot a_{(1)}ba_{(0)_{<-1>}}$, is an algebra 
isomorphism. First, using (\ref{sup4}), (\ref{4.8}), (\ref{extra1}),  
(\ref{extra6}), (\ref{4.9}), (\ref{extra2}), it is easy to see that 
$\varphi $ is bijective, with inverse given by $a\ot b\mapsto 
a_{[0]_{\{0\}}}\ot a_{[1]}ba_{[0]_{\{-1\}}}$. It is obvious that 
$\varphi (1\ot 1)=1\ot 1$, so we only have to prove that $\varphi $ is 
multiplicative. We compute:
\begin{eqnarray*}
\varphi ((a\ot b)(a'\ot b'))&=&\varphi (a_{\tilde{Q}}\bullet 
a'_{\tilde{R}}\ot b_{\tilde{R}}b'_{\tilde{Q}})\\
&=&\varphi (a_{<0>_{Q_{\{0\}}}}\bullet a'_{(0)_{R_{[0]}}}\ot 
a'_{(0)_{R_{[1]}}}b_Ra'_{(1)}a_{<-1>}b'_Qa_{<0>_{Q_{\{-1\}}}})\\
&=&[a_{<0>_{Q_{\{0\}}}}\bullet a'_{(0)_{R_{[0]}}}]_{(0)_{<0>}}\ot 
[a_{<0>_{Q_{\{0\}}}}\bullet a'_{(0)_{R_{[0]}}}]_{(1)}a'_{(0)_{R_{[1]}}}b_R
a'_{(1)}\\
&&a_{<-1>}b'_Qa_{<0>_{Q_{\{-1\}}}}
[a_{<0>_{Q_{\{0\}}}}\bullet a'_{(0)_{R_{[0]}}}]_{(0)_{<-1>}}\\
&\overset{(\ref{sup2})}{=}&[(a_{<0>_{Q_{\{0\}_{(0)}}}}\cdot 
a'_{(0)_{R_{[0]_{(0)_{r_{<-1>}}}}}})a'_{(0)_{R_{[0]_{(0)_{r_{<0>}}}}}}]
_{<0>}\\
&&\ot a_{<0>_{Q_{\{0\}_{(1)_r}}}}a'_{(0)_{R_{[0]_{(1)}}}}
a'_{(0)_{R_{[1]}}}b_Ra'_{(1)}a_{<-1>}b'_Qa_{<0>_{Q_{\{-1\}}}}\\
&&[(a_{<0>_{Q_{\{0\}_{(0)}}}}\cdot
a'_{(0)_{R_{[0]_{(0)_{r_{<-1>}}}}}})a'_{(0)_{R_{[0]_{(0)_{r_{<0>}}}}}}]
_{<-1>}\\
&\overset{(\ref{4.9})}{=}&[(a_{<0>_{Q_{\{0\}_{(0)}}}}\cdot 
a'_{(0)_{R_{r_{<-1>}}}})a'_{(0)_{R_{r_{<0>}}}}]_{<0>}\\
&&\ot  a_{<0>_{Q_{\{0\}_{(1)_r}}}}b_Ra'_{(1)}a_{<-1>}b'_Q
a_{<0>_{Q_{\{-1\}}}}\\ 
&&[(a_{<0>_{Q_{\{0\}_{(0)}}}}\cdot 
a'_{(0)_{R_{r_{<-1>}}}})a'_{(0)_{R_{r_{<0>}}}}]_{<-1>}\\
&\overset{(\ref{extra4})}{=}&a_{<0>_{Q_{\{0\}_{(0)_{<0>_q}}}}}
a'_{(0)_{R_{r_{<0>}}}}\ot a_{<0>_{Q_{\{0\}_{(1)_r}}}}
b_Ra'_{(1)}a_{<-1>}b'_Q\\
&&a_{<0>_{Q_{\{-1\}}}}a_{<0>_{Q_{\{0\}_{(0)_{<-1>}}}}}
a'_{(0)_{R_{r_{<-1>_q}}}}\\
&\overset{(\ref{sup4})}{=}&a_{<0>_{Q_{\{0\}_{<0>_{(0)_q}}}}}
a'_{(0)_{R_{r_{<0>}}}}\ot a_{<0>_{Q_{\{0\}_{<0>_{(1)_r}}}}}
b_Ra'_{(1)}a_{<-1>}b'_Q\\
&&a_{<0>_{Q_{\{-1\}}}}a_{<0>_{Q_{\{0\}_{<-1>}}}}
a'_{(0)_{R_{r_{<-1>_q}}}}\\
&\overset{(\ref{extra2})}{=}&a_{<0>_{Q_{(0)_q}}}
a'_{(0)_{R_{r_{<0>}}}}\ot a_{<0>_{Q_{(1)_r}}}
b_Ra'_{(1)}a_{<-1>}b'_Qa'_{(0)_{R_{r_{<-1>_q}}}}\\
&\overset{(\ref{sup5})}{=}&a_{<0>_{(0)_{Q_q}}}
a'_{(0)_{R_{r_{<0>}}}}\ot a_{<0>_{(1)_r}}
b_Ra'_{(1)}a_{<-1>}b'_Qa'_{(0)_{R_{r_{<-1>_q}}}}\\
&\overset{(\ref{sup4})}{=}&a_{(0)_{<0>_{Q_q}}}
a'_{(0)_{R_{r_{<0>}}}}\ot a_{(1)_r}
b_Ra'_{(1)}a_{(0)_{<-1>}}b'_Qa'_{(0)_{R_{r_{<-1>_q}}}}\\
&\overset{(\ref{sup6})}{=}&a_{(0)_{<0>_{Q_q}}}
a'_{(0)_{<0>_{R_r}}}\ot a_{(1)_r}
b_Ra'_{(1)}a_{(0)_{<-1>}}b'_Qa'_{(0)_{<-1>_q}}, 
\end{eqnarray*}  
\begin{eqnarray*}
\varphi (a\ot b)\varphi (a'\ot b')&=&
(a_{(0)_{<0>}}\ot a_{(1)}ba_{(0)_{<-1>}})
(a'_{(0)_{<0>}}\ot a'_{(1)}b'a'_{(0)_{<-1>}})\\
&=&a_{(0)_{<0>_Q}}a'_{(0)_{<0>_R}}\ot (a_{(1)}ba_{(0)_{<-1>}})_R
(a'_{(1)}b'a'_{(0)_{<-1>}})_Q\\
&\overset{(\ref{tw5}), (\ref{tw5'})}{=}&
a_{(0)_{<0>_{Q_{q_{\overline{q}}}}}}a'_{(0)_{<0>_{R_{r_{\overline{r}}}}}}
\ot a_{(1)_{\overline{r}}}b_ra_{(0)_{<-1>_R}}a'_{(1)_Q}b'_q
a'_{(0)_{<-1>_{\overline{q}}}}\\
&\overset{(\ref{extra5})}{=}&
a_{(0)_{<0>_{Q_q}}}a'_{(0)_{<0>_{R_r}}}
\ot a_{(1)_r}b_Ra'_{(1)}a_{(0)_{<-1>}}b'_Q
a'_{(0)_{<-1>_q}},
\end{eqnarray*}
finishing the proof.
\end{proof}
\begin{remark}
It is very easy to see that Theorem \ref{invundtw} generalizes 
Theorem 4.4 in \cite{jlpvo}. On the other hand, it generalizes also 
Proposition \ref{invutwlrsmash}. Indeed, Proposition \ref{invutwlrsmash} 
may be obtained by taking in Theorem \ref{invundtw} $A={\cal A}$, 
$B=H$, $R:H\otimes {\cal A}\rightarrow {\cal A}\otimes H$, 
$R(h\otimes \varphi )=h_1\cdot \varphi \otimes h_2$, $Q:{\cal A}\otimes H 
\rightarrow {\cal A}\otimes H$, $Q(\varphi \otimes h)=
\varphi \cdot h_2\otimes h_1$, 
$\mu _l:H\otimes {\cal A}\rightarrow {\cal A}$, $\mu _l(h\otimes 
\varphi )=h\cdot \varphi $,  
$\mu _r:{\cal A}\otimes H\rightarrow {\cal A}$, $\mu _r(\varphi 
\otimes h)=\varphi \cdot h$,  
$\rho _r:{\cal A}\rightarrow {\cal A}\otimes H$, 
$\rho _r(\varphi )=G^1\cdot \varphi \otimes G^2$, 
$\rho _l:{\cal A}\rightarrow H\otimes {\cal A}$, 
$\rho _l(\varphi )=F^1\otimes \varphi \cdot F^2$, 
$\lambda _r:{\cal A}\rightarrow {\cal A}\otimes H$, 
$\lambda _r(\varphi )=F^1\cdot \varphi \otimes F^2$, 
$\lambda _l:{\cal A}\rightarrow H\otimes {\cal A}$, 
$\lambda _l(\varphi )=G^1\otimes \varphi \cdot G^2$,  
for all $h\in H$ and $\varphi \in {\cal A}$. 
\end{remark}

\end{document}